\newtheorem{claim}{Claim}[section]
\newtheorem{thm}{Theorem}
\newtheorem{cor}[claim]{Corollary}
\newtheorem{lem}[claim]{Lemma}
\theoremstyle{definition}
\newtheorem{defi}[claim]{Definition}
\newcommand{\F}{{\mathbb F}}
\newcommand{\Aut}{{\mathrm{Aut}}}
\newcommand{\Out}{{\mathrm{Out}}}
\newcommand{\Inn}{{\mathrm{Inn}}}
\newcommand{\con}{{\mathrm{conj}}}
\newcommand{\Sym}{{\mathrm{Sym}}}
\newcommand{\Hol}{{\mathrm{Hol}}}
\newcommand{\InHol}{{\mathrm{InHol}}}
\newcommand{\inv}{{\mathrm{inv}}}
\newcommand{\CS}{{\mathrm{CS}}}
\newcommand{\isom}{{\xrightarrow{\sim}}}
\newcommand{\proper}{%
  \mathrel{\ooalign{$\lneq$\cr\raise.22ex\hbox{$\lhd$}\cr}}}
\numberwithin{equation}{section}
\begin{document}

\author{Alexei Entin}
\address{Raymond and Beverly Sackler School of Mathematical Sciences, Tel Aviv University, Tel Aviv 69978, Israel}
\email{aentin@tauex.tau.ac.il}
\author{Cindy (Sin Yi) Tsang}
\address{Department of Mathematics, Ochanomizu University, 2-1-1 Otsuka, Bunkyo-ku, Tokyo, Japan}
\email{tsang.sin.yi@ocha.ac.jp}

\subjclass[2020]{20B30, 20B35, 20D45}

\title{Normalizer quotients of symmetric groups \\and inner holomorphs}

\begin{abstract} We show that every finite group $T$ is isomorphic to a normalizer quotient $N_{S_n}(H)/H$ for some $n$ and a subgroup $H\leq S_n$. We show that this holds for all large enough $n\ge n_0(T)$ and also with $S_n$ replaced by $A_n$. The two main ingredients in the proof are a recent construction due to Cornulier and Sambale of a finite group $G$ with $\Out(G)\cong T$ (for any given finite group $T$) and the determination of the normalizer in $\Sym(G)$ of the inner holomorph $\InHol(G)\leq\Sym(G)$ for any centerless indecomposable finite group $G$, which may be of independent interest.
\\ \\
\emph{Key words and phrases.} Finite group, symmetric group, inner holomorph, normalizer quotient.
\end{abstract}

\maketitle

\section{Introduction}

The following question was raised by M\"uller \cite{Mul12_}: Is every finite group isomorphic to a normalizer quotient of a symmetric group, that is a group of the form $N_{S_n}(H)/H$ for some $n$ and $H\leq S_n$? In the present paper we give a positive answer.

\begin{defi}
    We say that a group $T$ is a \emph{normalizer quotient} of another group $S$, if there exists a subgroup $H\leq S$ such that $N_S(H)/H\cong T$.
\end{defi}

\begin{thm}\label{thm: main} Let $T$ be a finite group. There exists a natural number $n$ such that $T$ is a normalizer quotient of $S_n$.\end{thm}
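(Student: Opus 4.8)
The plan is to realize $T$ as the outer automorphism group of a carefully chosen group and then read it off from a normalizer quotient of a symmetric group. First I would invoke the Cornulier--Sambale construction to obtain a finite group $G$ with $\Out(G)\cong T$, and (modifying it if necessary) arrange that $G$ is in addition \emph{centerless}, \emph{directly indecomposable}, and \emph{perfect}. Setting $n=|G|$ and identifying $S_n=\Sym(G)$, I would work with the left and right regular representations $\lambda,\rho\colon G\to\Sym(G)$, so that $\Hol(G)=\lambda(G)\rtimes\Aut(G)=N_{\Sym(G)}(\lambda(G))$ and $\InHol(G)=\lambda(G)\rho(G)$. Since $G$ is centerless, $\lambda(G)\cap\rho(G)=1$, hence $\InHol(G)=\lambda(G)\times\rho(G)\cong G\times G$ acting on $G$ by two-sided translation, and moreover $\Hol(G)/\InHol(G)\cong\Aut(G)/\Inn(G)=\Out(G)\cong T$. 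The problem is thereby reduced to computing the normalizer $N:=N_{\Sym(G)}(\InHol(G))$.

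The heart of the argument is the determination of $N$. Because $G$ is centerless, $\InHol(G)$ is transitive with trivial centralizer in $\Sym(G)$, so conjugation embeds $N$ into $\Aut(\InHol(G))=\Aut(G\times G)$, with image exactly the stabilizer of the conjugacy class of the point stabilizer $\Delta=\{(g,g):g\in G\}\cong G$ (this is the standard correspondence between the normalizer of a transitive group and the automorphisms preserving the point stabilizer up to conjugacy). Using that $G$ is centerless and indecomposable, I would invoke the description of automorphisms of direct powers to obtain $\Aut(G\times G)=\Aut(G)\wr C_2$. A direct computation then shows that the swap $\sigma$ fixes $\Delta$, while a pair $(\alpha,\beta)\in\Aut(G)\times\Aut(G)$ carries $\Delta$ to a conjugate of itself precisely when $\beta\alpha^{-1}\in\Inn(G)$, the $(G\times G)$-conjugates of $\Delta$ being exactly the graphs of inner automorphisms. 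Translating back, this identifies $N$ with $\Hol(G)\rtimes\langle\iota\rangle$, where $\iota\colon x\mapsto x^{-1}$ is the inversion map: indeed $\iota$ normalizes $\InHol(G)$ and realizes the swap $\lambda(g)\leftrightarrow\rho(g)$, while $\iota\notin\Hol(G)$ because $G$ is nonabelian. Consequently $N/\InHol(G)\cong\Out(G)\times C_2$.

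The extra factor of $C_2$ coming from $\iota$ means that $\InHol(G)$ by itself does not yet yield $T$, so the final step is to absorb it. I would take $H:=\langle\InHol(G),\iota\rangle=\InHol(G)\rtimes\langle\iota\rangle$. Since $G$ is perfect, $\InHol(G)\cong G\times G$ is perfect and equals the commutator subgroup $[H,H]$; hence $H$ has abelianization $C_2$ and $\InHol(G)$ is the unique subgroup of index $2$ in $H$, so it is characteristic in $H$. Therefore $N_{\Sym(G)}(H)\le N_{\Sym(G)}(\InHol(G))=N$, and since $N$ visibly normalizes $H$ we get $N_{\Sym(G)}(H)=N$. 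Finally, because $\iota\notin\Hol(G)$ one checks $\Hol(G)\cap H=\InHol(G)$ and $N=\Hol(G)\cdot H$, whence $N_{\Sym(G)}(H)/H=N/H\cong\Hol(G)/\InHol(G)=\Out(G)\cong T$, giving the desired normalizer quotient with this $H\le S_n$.

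I expect the main obstacle to be the determination of $N$ in the second step: pinning down $\Aut(G\times G)=\Aut(G)\wr C_2$ and the stabilizer of the class of $\Delta$, where both the centerless and the indecomposable hypotheses are essential (if $G$ decomposed, $\Aut(G\times G)$ would acquire extra factor permutations and the computation would break). A second, more technical point is ensuring that the Cornulier--Sambale group can be taken simultaneously centerless, indecomposable, and perfect; perfectness is precisely what forces $\InHol(G)$ to be characteristic in $H$ and lets the spurious $C_2$ be removed cleanly.
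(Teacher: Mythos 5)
Your overall architecture matches the paper's (Cornulier--Sambale group, determination of $N_{\Sym(G)}(\InHol(G))$, absorbing the inversion into $H=\langle\InHol(G),\inv_G\rangle$), but there is one genuine gap: the perfectness requirement. The group $\CS(T,p)=P\rtimes Q$ is an extension with $P$ a finite $p$-group and $Q\cong C_{p-1}^{|T|}$ abelian; it is solvable, hence \emph{never} perfect, and your parenthetical ``modifying it if necessary'' conceals the real difficulty --- no construction of a finite perfect centerless indecomposable group with $\Out(G)\cong T$ is supplied, and none follows readily from Cornulier--Sambale. You use perfectness exactly once, to force $\InHol(G)=[H,H]$ and hence characteristic in $H$; without it, $H$ has a sizable abelianization and may contain several subgroups of index $2$, so your uniqueness argument collapses. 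The paper closes this step differently (Lemma \ref{lem: char 2}), exploiting the specific structure of $\CS(T,p)$: $\lambda(P)\rho(P)$ is the unique Sylow $p$-subgroup of $H$, hence characteristic, and in the quotient $H/\lambda(P)\rho(P)\cong(Q\times Q)\rtimes C_2$ the subgroup $Q\times Q$ is the unique \emph{abelian} subgroup of index $2$ (Lemma \ref{lem: char 1}, which needs $|Q|>2$); thus $\InHol(G)$ is characteristic in $H$ with no perfectness hypothesis, and your argument is repaired verbatim by this substitution. A smaller omission: Theorem \ref{thm: cs} requires $p>|T|>2$, so $T=1$ and $T\cong C_2$ must be handled separately (the paper takes $n=1$, $H=S_1$, respectively $n=2$, $H=1$).

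On the other hand, your determination of $N=N_{\Sym(G)}(\InHol(G))$ is correct and genuinely different from the paper's. The paper proves Theorem \ref{thm: inhol} (an if-and-only-if) by parametrizing regular subgroups of $\InHol(G)$ via fixed point free pairs of homomorphisms (Byott--Childs, Lemmas \ref{lem:regular} and \ref{lem:centralizer}) and analyzing which ones have centralizer inside $\InHol(G)$. You instead use the classical correspondence for a transitive group with trivial centralizer: $N$ embeds in $\Aut(\InHol(G))$ with image the stabilizer of the conjugacy class of the point stabilizer $\Delta$, combined with the uniqueness of the Remak decomposition of centerless groups to get $\Aut(G\times G)=\Aut(G)\wr C_2$, after which a short computation (graphs of inner automorphisms) and an order count give $N=\langle\Hol(G),\inv_G\rangle$. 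Both hypotheses (centerless, indecomposable) enter exactly where you say they do, and your route is arguably more direct for the one implication Theorem \ref{thm: main} needs, though unlike the paper's argument it does not by itself yield the converse direction of Theorem \ref{thm: inhol}.
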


The question was motivated by a variation of the Inverse Galois Problem, called the Weak Inverse Galois Problem in \cite{DeLe21} and studied also in \cite{FrKo78, Fri80, Tak80, Gey83, LePa18}, in which given a base field $F$ and finite group $T$ one looks for (not necessarily Galois) finite extensions $K/F$ with $\Aut(K/F)\cong T$. Applications of Theorem \ref{thm: main} (and its corollary below) to this problem appear in a separate paper by the first author \cite{Ent24_2}.

\begin{cor}\label{cor: main} Let $T$ be a finite group. For all large enough $n\ge n_0(T)$, the group $T$ is a normalizer quotient of $S_n$ and of $A_n$.\end{cor}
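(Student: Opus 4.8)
The plan is to deduce the corollary from Theorem~\ref{thm: main} by a single uniform padding argument that treats $S_n$ and $A_n$ in parallel. Fix $T$ and, invoking Theorem~\ref{thm: main}, choose $m$ and a subgroup $H\le S_m$ with $N:=N_{S_m}(H)$ satisfying $N/H\cong T$; I regard $S_m$ as acting on a set $\Omega$ of size $m$. For an integer $k$ I introduce a disjoint set $\Delta$ of size $k$ carrying the full symmetric group $S_k=\Sym(\Delta)$, and I work inside $S_n=\Sym(\Omega\sqcup\Delta)$ with $n=m+k$. The single structural fact I will exploit throughout is that, as soon as $k>m$, no $H$-orbit on $\Omega$ can have length $k$, since every such orbit has length at most $m$.

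For the assertion about $S_n$ I take $H^\ast=H\times S_k\le S_n$. Its orbits are the $H$-orbits on $\Omega$, all of length $\le m$, together with the single orbit $\Delta$ of length $k>m$. Hence any $\sigma\in N_{S_n}(H^\ast)$ permutes orbits preserving lengths, so it fixes $\Delta$ setwise and decomposes into a permutation $\alpha\in\Sym(\Omega)$ and a permutation $\beta\in\Sym(\Delta)$. Conjugation by such a $\sigma$ preserves the subgroups $\{g\in H^\ast:g|_\Delta=\mathrm{id}\}=H\times 1$ and $1\times S_k$, forcing $\alpha\in N$ and $\beta\in S_k$; conversely every element of $N\times S_k$ normalizes $H^\ast$. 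Thus $N_{S_n}(H^\ast)=N\times S_k$ and
\[
 N_{S_n}(H^\ast)/H^\ast=(N\times S_k)/(H\times S_k)\cong N/H\cong T .
\]

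For the assertion about $A_n$ I instead set $H'=(H\times S_k)\cap A_n$, the even part of $H^\ast$, so that $H'\le A_n$ by construction. Assuming $k\ge 3$, the subgroup $1\times A_k\le H'$ is already transitive on $\Delta$, so the orbit-length analysis applies verbatim to $H'$: its orbits are the $H$-orbits on $\Omega$ and the length-$k$ orbit $\Delta$, whence every $\sigma\in N_{S_n}(H')$ again splits into $\alpha\in\Sym(\Omega)$ and $\beta\in\Sym(\Delta)$. Since the projection $H'\to H$ is surjective (given $h\in H$ one may pick $s\in S_k$ of the same sign, so $(h,s)\in H'$), normalizing $H'$ forces $\alpha\in N$, and conversely $N\times S_k$ normalizes $H'$; hence $N_{S_n}(H')=N\times S_k$ exactly as before. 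Intersecting with $A_n$ gives $N_{A_n}(H')=(N\times S_k)\cap A_n$. The quotient map $N\times S_k\to(N\times S_k)/(H\times S_k)\cong T$, restricted to $N_{A_n}(H')$, has kernel $(H\times S_k)\cap A_n=H'$ and so induces an injection $N_{A_n}(H')/H'\hookrightarrow T$. Because $S_k$ contains odd permutations, the sign homomorphism is surjective on both $H\times S_k$ and $N\times S_k$, so both even parts have index exactly $2$; comparing orders yields $|N_{A_n}(H')/H'|=|N/H|=|T|$, and the injection is an isomorphism.

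Since $m$ depends only on $T$ and both constructions are valid for every $k>m$ (with $k\ge 3$ in the alternating case), they realize $T$ as a normalizer quotient of $S_n$ and of $A_n$ for all $n=m+k>2m$, so one may take $n_0(T)=2m+1$. The step I expect to require the most care is verifying that the normalizer does not grow under padding: everything hinges on the orbit-length separation forcing a normalizing permutation to respect the partition $\Omega\sqcup\Delta$, which is exactly why the choice $k>m$ is essential. The one genuinely new point in the alternating case is the order count, which guarantees that restricting to even permutations produces $T$ itself rather than an index-$2$ subgroup.
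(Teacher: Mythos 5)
Your proposal is correct and follows essentially the same route as the paper's own proof: you pad $H\le S_m$ to $H\times S_k$ with $k>m$ so that the orbit of length $k$ is unique and any normalizing permutation must preserve the partition $\Omega\sqcup\Delta$, compute $N_{S_n}(H\times S_k)=N_{S_m}(H)\times S_k$, and treat $A_n$ by intersecting with the even part and using the surjectivity of the sign map (index-$2$ count) to upgrade the injection $N_{A_n}(H')/H'\hookrightarrow T$ to an isomorphism. The paper's Lemma on padding does exactly this with $m-n>n$ in place of your $k>m$, and it arrives at the same bound $n_0(T)=2m+1$.
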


A key ingredient in the proof of Theorem \ref{thm: main} is the determination of the normalizer of the inner holomorph of a centerless indecomposable group, which may be of independent interest. First we recall a few definitions.

Let $G$ be a finite group and let $\Sym(G)$ denote the group of all permutations of the set $G$. The \emph{left regular representation} of $G$ is defined as the map
$$\lambda:G\to\Sym(G):\quad g\mapsto(x\mapsto gx).$$
Similarly, the \emph{right regular representation} of $G$ is defined as the map $$\rho:G\to\Sym(G):\quad g\mapsto(x\mapsto xg^{-1}).$$ 
The \emph{(permutational) holomorph} of $G$ is defined to be
$$\Hol(G)=\lambda(G)\rtimes\Aut(G)=\rho(G)\rtimes\Aut(G)\leq\Sym(G).$$ 
Alternatively, it is not difficult (see \cite[Proposition 7.2]{Childsbook}) to show that
$$\Hol(G) = N_{\Sym(G)}(\lambda(G)) = N_{\Sym(G)}(\rho(G)).$$
In view of this, the \emph{multiple holomorph} of $G$ is defined to be 
\[ \mathrm{NHol}(G) = N_{\Sym(G)}(\Hol(G))=N_{\Sym(G)}(N_{\Sym(G)}(\lambda(G)))=N_{\Sym(G)}(N_{\Sym(G)}(\rho(G))).\]
It is well-known that the quotient $\mathrm{NHol}(G)/\Hol(G)$ acts regularly on the set of regular normal subgroups of $\Hol(G)$ that are isomorphic to $G$ (see \cite[\S 1]{Kohl} for instance) and its structure has been studied for various families of groups $G$ \cite{Kohl,fgabelian,Perfect,largeTG,squarefree}. Here we will restrict to the group $\Inn(G)$ of inner automorphisms of $G$ and consider the normalizer of the \emph{inner holomorph} of $G$, the latter defined to be
$$\InHol(G)=\lambda(G)\rho(G)=\lambda(G)\rtimes\Inn(G)=\rho(G)\rtimes\Inn(G)\leq\Sym(G).$$
Clearly $\Hol(G)$ normalizes $\InHol(G)$. The inversion map $\inv_G\in\Sym(G)$ defined by $\inv_G(g)=g^{-1}$ for any $g\in G$ also normalizes $\InHol(G)$ because it swaps $\lambda(G)$ and $\rho(G)$. Since $\inv_G$ centralizes $\Aut(G)$, it follows that $\inv_G$ normalizes $\Hol(G)$ and we have
$$ N_{\Sym(G)}(\InHol(G)) \geq \langle \Hol(G),\inv_G\rangle = \Hol(G)\cup \Hol(G)\inv_G.$$
We note that $\inv_G\in \Hol(G)$ if and only if $G$ is abelian. In the case that $G$ is centerless (that is $Z(G)=1$), we will give a complete characterization of when equality holds, i.e. when $N_{\Sym(G)}(\InHol(G))$ is as small as possible.

\begin{thm}\label{thm: inhol} Let $G$ be a finite centerless group. We have $N_{\Sym(G)}(\InHol(G))=\langle \Hol(G),\inv_G\rangle$ if and only if $G$ is indecomposable (that is, cannot be written as a direct product of proper normal subgroups).
\end{thm}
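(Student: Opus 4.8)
The plan is to study the conjugation action of $N:=N_{\Sym(G)}(\InHol(G))$ on $\InHol(G)$ and reduce the theorem to a statement about $\Aut(\InHol(G))$. Since $G$ is centerless, $\lambda(G)\cap\rho(G)=\lambda(Z(G))=1$ and $\lambda(G),\rho(G)$ commute elementwise, so $\InHol(G)=\lambda(G)\times\rho(G)\cong G\times G$, acting on $G$ by $(\lambda(g)\rho(h))\cdot x=gxh^{-1}$. The stabilizer of the point $1\in G$ is the diagonal $\Delta=\{\lambda(g)\rho(g):g\in G\}$, corresponding to $\{(g,g)\}$ under $\InHol(G)\cong G\times G$. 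Because $C_{\Sym(G)}(\lambda(G))=\rho(G)$ and $G$ is centerless, one checks $C_{\Sym(G)}(\InHol(G))=1$, so the conjugation map $c:N\to\Aut(\InHol(G))$ is injective, and everything reduces to identifying the image $c(N)$.

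The key device is a realizability criterion: an automorphism $\phi\in\Aut(\InHol(G))$ lies in $c(N)$ iff there is $\sigma\in\Sym(G)$ with $\sigma k\sigma^{-1}=\phi(k)$ for all $k$, which (as $\InHol(G)$ is transitive on $G$) happens iff the transitive permutation representations given by the inclusion and by $\phi$ followed by the inclusion are equivalent, i.e. iff the point stabilizer $\Delta$ and its image $\phi(\Delta)$ are conjugate in $\InHol(G)$. A direct computation shows the $\InHol(G)$-conjugates of $\Delta$ are exactly the graphs $\{(g,\iota(g)):g\in G\}$ of inner automorphisms $\iota\in\Inn(G)$, so $c(N)=\{\phi:\phi(\Delta)\text{ is the graph of an inner automorphism}\}$. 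Separately, computing how $\lambda(g),\rho(g),\Aut(G)$ and $\inv_G$ act by conjugation identifies $c(\langle\Hol(G),\inv_G\rangle)$ with $D\rtimes\langle\tau\rangle$, where $\tau$ is the factor swap $(g,h)\mapsto(h,g)$ and $D=\{(\alpha,\beta)\in\Aut(G)\times\Aut(G):\alpha\Inn(G)=\beta\Inn(G)\}$ acts by $(g,h)\mapsto(\alpha(g),\beta(h))$.

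For the direction ``indecomposable $\Rightarrow$ equality,'' I would invoke the structure of $\Aut(G\times G)$ for centerless indecomposable $G$: by Krull--Remak--Schmidt the two direct factors are permuted, so $\Aut(\InHol(G))=(\Aut(G)\times\Aut(G))\rtimes\langle\tau\rangle=\Aut(G)\wr S_2$. Writing a general $\phi=(\alpha,\beta)\tau^{\epsilon}$ and using that $\tau$ fixes $\Delta$ setwise, one gets $\phi(\Delta)=\{(\alpha(g),\beta(g)):g\in G\}=\mathrm{graph}(\beta\alpha^{-1})$, which is the graph of an inner automorphism iff $\beta\alpha^{-1}\in\Inn(G)$, i.e. iff $\phi\in D\rtimes\langle\tau\rangle$. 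Hence $c(N)=D\rtimes\langle\tau\rangle=c(\langle\Hol(G),\inv_G\rangle)$, and injectivity of $c$ yields $N=\langle\Hol(G),\inv_G\rangle$.

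For the converse I would argue contrapositively: given a nontrivial decomposition $G=G_1\times G_2$ (both factors are centerless, hence nonabelian, since a nontrivial centerless group is never abelian), I would exhibit an explicit extra normalizing element, the partial inversion $\sigma\in\Sym(G)$ with $\sigma(x_1,x_2)=(x_1^{-1},x_2)$. A short computation shows $\sigma(\lambda(g)\rho(h))\sigma^{-1}$ is the map $x\mapsto g'xh'^{-1}$ with $g'=(h_1,g_2)$ and $h'=(g_1,h_2)$, so $\sigma$ normalizes $\InHol(G)$, inducing the ``half swap'' exchanging only $\lambda(G_1)\leftrightarrow\rho(G_1)$. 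But $\sigma$ fixes $1$, so were it in $\langle\Hol(G),\inv_G\rangle=\Hol(G)\cup\Hol(G)\inv_G$ it would be an automorphism of $G$ or the composite of an automorphism with $\inv_G$; in either case this forces an automorphism inverting exactly one nonabelian direct factor, which is impossible, so $\sigma\notin\langle\Hol(G),\inv_G\rangle$ and the normalizer is strictly larger. I expect the conceptual crux, rather than the two routine conjugation computations, to be the reduction $c:N\hookrightarrow\Aut(\InHol(G))$ together with the realizability criterion pinning $c(N)$ to the automorphisms preserving the conjugacy class of $\Delta$, and the description $\Aut(G\times G)=\Aut(G)\wr S_2$ -- precisely the point where indecomposability is indispensable and whose failure creates room for the partial inversions above.
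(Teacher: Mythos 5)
Your proof is correct, but it follows a genuinely different route from the paper's. The paper never looks at $\Aut(\InHol(G))$: it first reduces the normalizer equality to a statement about regular subgroups (equality holds iff $\lambda(G)$ and $\rho(G)$ are the only regular subgroups $R\leq\InHol(G)$ isomorphic to $G$ with $C_{\Sym(G)}(R)\leq\InHol(G)$), then parametrizes the regular subgroups of $\InHol(G)$ by fixed point free pairs of homomorphisms $f,g\colon N\to G$ via $R_{(f,g)}=\{\lambda(f(x))\rho(g(x)):x\in N\}$ following Byott--Childs, and characterizes mutually centralizing pairs by $[f(N),f'(N)]=[g(N),g'(N)]=1$; indecomposability then enters only through the identity $G=\ker(f)\times\ker(g)$, obtained by an order count, with the decomposable direction witnessed by the pair of projections --- no Krull--Remak--Schmidt anywhere. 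You instead transfer everything into $\Aut(G\times G)$ via the conjugation map $c\colon N_{\Sym(G)}(\InHol(G))\to\Aut(\InHol(G))$, injective since $C_{\Sym(G)}(\InHol(G))=\lambda(G)\cap\rho(G)=1$, use the standard criterion that $\phi$ is realizable by a permutation iff $\phi(\Delta)$ is $\InHol(G)$-conjugate to the point stabilizer $\Delta$, and compute inside $\Aut(G\times G)=\Aut(G)\wr S_2$. I checked your computations and they are all correct: the $\InHol(G)$-conjugates of $\Delta$ are exactly the graphs of inner automorphisms, $c(\langle\Hol(G),\inv_G\rangle)=D\rtimes\langle\tau\rangle$, and the partial inversion $\sigma(x_1,x_2)=(x_1^{-1},x_2)$ normalizes $\InHol(G)$, fixes $1_G$, and lies outside $\Hol(G)\cup\Hol(G)\inv_G$ because neither $\sigma$ nor $\sigma\inv_G$ is an automorphism (both factors of a nontrivial centerless decomposition are nonabelian, since a nontrivial group with trivial center is nonabelian). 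Two remarks: your Krull--Remak--Schmidt step needs the strengthened uniqueness valid precisely because $Z(G)=1$ --- the indecomposable direct factors of $G\times G$ are unique as subgroups, not merely up to isomorphism, so automorphisms genuinely permute them --- and you should cite this explicitly, since it fails for general groups (e.g.\ $C_p\times C_p$); and, amusingly, conjugating $\lambda(G)$ by your $\sigma$ produces exactly the paper's witness subgroup $R_{(f,g)}$ with $f,g$ the two projections, so the two treatments of the decomposable direction are dual to one another. The trade-off: the paper's route stays within regular-subgroup and holomorph machinery standard in this literature and extracts both directions symmetrically from a single characterization lemma, while yours buys a sharper structural statement --- an explicit identification of the realizable automorphisms as those elements of $\Aut(G)\wr S_2$ preserving the conjugacy class of the diagonal --- at the cost of invoking the centerless form of Krull--Remak--Schmidt.
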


Here is a brief sketch of the proof 
of Theorem \ref{thm: main}. Let $T$ be a finite group of order $|T|>2$ and let $p>|T|$ be a prime. The 
Cornulier-Sambale construction (see \S\ref{sec: cs}) 
produces a finite group $G=\CS(T,p)$ 
that has outer automorphism group 
$\Out(G)\cong T$. Furthermore we show 
that $G$ is centerless and 
indecomposable when $p>|T|+1$, so Theorem \ref{thm: 
inhol} applies and we have $N_{\Sym(G)}
(\InHol(G))=\langle 
\Hol(G),\inv_G\rangle$. Using this and some 
further properties of $G$ to be established in 
$\S\ref{sec: cs}$, we will show that 
$H=\langle\InHol(G),\inv_G\rangle$ has the same 
normalizer $N_{\Sym(G)}
(H)=\langle\Hol(G),\inv_G\rangle$ and then deduce that $N_{\Sym(G)}(H)/H\cong\Out(G)\cong T$.

The paper is organized as follows: In $\S\ref{sec: cs}$ we review the Cornulier-Sambale construction and prove some useful properties of it. In $\S\ref{sec: inhol}$ we give the proof of Theorem \ref{thm: inhol}. Finally, the proofs of Theorem \ref{thm: main} and Corollary \ref{cor: main} will be given in \S\ref{sec: proof main}.

{\bf Acknowledgment.} The first author was partially supported by Israel Science Foundation grant no. 2507/19.

\section{The Cornulier-Sambale construction}
\label{sec: cs}

In the present section we summarize a construction due to Cornulier \cite{Cor20_} and Sambale \cite{Sam24_1}, which given a finite group $T$ produces a finite group $G$ with $\Out(G)\cong T$. We will also prove some properties of this construction which will be important for our application.
Cornulier's original construction was cast in the language of Lie algebras using the Lazar-Mal'cev correspondence \cite[\S 10]{Khu98}. Sambale described a similar construction in a purely group-theoretic language. Here we follow Sambale \cite[\S 3]{Sam24_1}.

In what follows let $p$ be a prime and $n$ a natural number. Let $F_n$ be the free group of rank $n$ and consider the group $F$ with presentation $$F=\langle t_1,\ldots,t_n\,|\,w(t_1,\ldots,t_n)^p=1\,:\,w\in F_n\rangle.$$ This is the free exponent $p$ group on generators $t_1,\ldots,t_n$. Consider its lower central series
$$F^{[1]}=F,\quad F^{[l+1]}=[F,F^{[l]}].$$
For any $k$, the group $F/F^{[k+1]}$ is the universal exponent $p$ group of nilpotency class $k$ on $n$ generators, in the sense that if $P_k$ is a nilpotent group of exponent $p$ and nilpotency class $\le k$, then for any elements $x_1,\ldots,x_n\in P_k$ there exists a unique homomorphism $F/F^{[k+1]}\to P_k$ taking $t_i\bmod F^{[k+1]}$ to $x_i$ \cite[p. 6]{Sam24_1}. Moreover $F/F^{[k+1]}$ is a finite $p$-group because each $F^{[l]}/F^{[l+1]}$ is a finite elementary abelian $p$-group \cite[p. 6]{Sam24_1}. We define $l$-fold commutators recursively by setting
$$[g]=g,\quad [g_1,\ldots,g_l]=[g_1,[g_2,\ldots,g_l]].$$
Let us also denote $Q=(\F_p^\times)^n\simeq C_{p-1}^n$.

The group $Q=(\F_p^\times)^n$ acts by automorphisms on $F$ \cite[p. 8]{Sam24_1}. We use a right exponential notation for this action. The action of $a=(a_1,\ldots,a_n)\in Q$ on the generators is given by $t_i^a=t_i^{a_i}$ (raising to the $a_i$-th power). This action descends to $F/F^{[k+1]}$ and satisfies
\begin{equation}\label{eq: action of Q on commutator}[t_{i_1},\ldots,t_{i_k}]^a\equiv [t_{i_1},\ldots,t_{i_k}]^{a_{i_1}\cdots a_{i_k}}\pmod{ F^{[k+1]}}\end{equation}
by \cite[Lemma 6]{Sam24_1}.
Consequently, the action of $Q$ also descends to the quotient group $F/N_kF^{[k+1]}$ where $N_k\leq F^{[k]}$ is any subgroup generated by elements of the form $[t_{i_1},\ldots,t_{i_k}]$ (it follows from Lemma \ref{lem: FkFk1}(i) below that $N_kF^{[k+1]}\trianglelefteq F^{[k]}$ and hence $N_kF^{[k+1]}\trianglelefteq F$ because $F^{[k]}$ is characteristic in $F$).


Now assume that the set of generators $T=\{t_1,\ldots,t_n\}$ is equipped with a group structure with the underlying binary operation denoted by $*$. The following construction appears in \cite[\S 3]{Sam24_1}.

\begin{defi}\label{def: cs} The \emph{Cornulier-Sambale group} of $T$ with respect to the prime $p$ is defined as follows: first set $P=F/NF^{[n+1]}$, where 
$$N=\langle [t*t_1,t*t_2,\ldots,t*t_{n-1},t*t_1]\,:\,t\in T\rangle.$$ As noted above $Q=(\F_p^\times)^n$ acts on $P$ and we define $\CS(T,p)=P\rtimes Q$ with respect to this action.\end{defi}

The main property of $\CS(T,p)$ is the following:

\begin{thm}[{Cornulier-Sambale \cite[Theorem 8]{Sam24_1}}] \label{thm: cs}$\Out(\CS(T,p))\cong T$ whenever $p>|T|>2$.\end{thm}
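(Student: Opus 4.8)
The plan is to compute $\Aut(G)$ for $G=\CS(T,p)=P\rtimes Q$ directly and to identify $\Out(G)$ with the left-regular copy of $T$ sitting inside it; the whole point of adjoining the torus $Q=(\F_p^\times)^n$ is that it rigidifies the otherwise enormous automorphism group of the $p$-group $P$. Since $[G:P]=(p-1)^n$ is coprime to $p$, the subgroup $P$ is the unique Sylow $p$-subgroup and hence characteristic, so every $\phi\in\Aut(G)$ stabilizes $P$, while $Q$ is a Hall $p'$-subgroup. By the conjugacy part of Schur--Zassenhaus I may, after composing $\phi$ with an inner automorphism, assume that $\phi$ also stabilizes $Q$. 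Such a $\phi$ induces $\psi:=\phi|_Q\in\Aut(Q)$ and, being $Q$-equivariant, it preserves the weight decomposition of each lower central layer $F^{[k]}/F^{[k+1]}$ of $P$ under $Q$, where $[t_{i_1},\ldots,t_{i_k}]$ has weight the character $a\mapsto a_{i_1}\cdots a_{i_k}$ by \eqref{eq: action of Q on commutator}.

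On the bottom layer $P/[P,P]\cong\F_p^n$ the images of $t_1,\ldots,t_n$ form a basis of pairwise distinct weights $\chi_1,\ldots,\chi_n$, so $\phi$ permutes these weight lines by some $\sigma\in S_n$ and rescales them. The rescaling is absorbed by conjugation by a suitable element of $Q$ --- an inner automorphism, since conjugation by $a\in Q$ scales $t_i$ by the arbitrary factor $a_i\in\F_p^\times$ --- so after a further inner adjustment $\phi$ acts on $P/[P,P]$ as the pure permutation $\sigma$, and then $Q$-equivariance forces $\psi$ to be the coordinate permutation by $\sigma$. Writing $\phi(t_i)=t_{\sigma(i)}c_i$ with $c_i\in[P,P]$, equivariance forces $c_i$ to carry the degree-one weight $\chi_{\sigma(i)}$; but a short count in the character group $(\Z/(p-1))^n$ shows that when $p-1\ge n$ no commutator of two or more generators can carry a degree-one weight, so $c_i=1$ and $\phi$ is determined on generators by $\sigma$ alone.

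It remains to determine which $\sigma$ occur. For $\phi$ to descend from $F$ to $P=F/NF^{[n+1]}$ the permutation $\sigma$ must satisfy $\sigma(N)\subseteq NF^{[n+1]}$, that is, each left-normed commutator $\sigma(R_t)=[\sigma(t*t_1),\ldots,\sigma(t*t_{n-1}),\sigma(t*t_1)]$ must lie modulo $F^{[n+1]}$ in the span of the generators $R_{t'}$ of $N$. Here I would use $p>n$: the degree-$n$ layer $F^{[n]}/F^{[n+1]}$ then coincides with the degree-$n$ component of the free Lie algebra over $\F_p$ on $n$ generators (the restricted $p$-power relations first appear in degree $p>n$), in which these commutators are linearly independent and are separated by their supports. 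Since $R_t$ omits the generator $t*t_n$ and $t\mapsto t*t_n$ is a bijection, the $R_t$ have pairwise distinct supports; this forces $\sigma(R_t)$ to be a scalar multiple of a single $R_{t'}$, and comparing the two left-normed commutators entrywise gives $\sigma(t*t_i)=t'*t_i$ for $1\le i\le n-1$. Equivalently, $\sigma$ agrees with left translation by some $s(t)\in T$ on the set $T\setminus\{t*t_n\}$. As $t$ varies these are the complements of single points, which pairwise intersect once $n=|T|>2$, so all the $s(t)$ coincide to a single $s$ and $\sigma=L_s$ is left translation; this is the only place $|T|>2$ is needed.

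Conversely every left translation $L_s:t_i\mapsto s*t_i$ permutes the defining family, sending $R_t$ to $R_{s*t}$, and therefore --- paired with the corresponding coordinate permutation of $Q$ --- defines an element of $\Aut(G)$. The resulting map $s\mapsto[L_s]$ is a homomorphism $T\to\Out(G)$; it is surjective by the previous paragraphs and injective because every inner automorphism acts diagonally, without permuting, on $P/[P,P]$, so $L_s$ is inner only when its underlying permutation is trivial, i.e. $s=e$. Hence $\Out(G)\cong T$. I expect the main obstacle to be the third paragraph: proving the linear independence and support-rigidity of the top-layer commutators $R_t$ inside the degree-$n$ free Lie algebra over $\F_p$. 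It is exactly here, together with the suppression of low-weight corrections in the second paragraph, that the hypothesis $p>|T|$ is indispensable, whereas $|T|>2$ enters only through the elementary covering argument for the complements $T\setminus\{t*t_n\}$.
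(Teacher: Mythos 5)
First, a point of comparison: the paper does not prove this statement at all --- Theorem \ref{thm: cs} is imported verbatim from Sambale \cite[Theorem 8]{Sam24_1} (adapting Cornulier \cite{Cor20_}), so your attempt must be measured against that cited proof. Your outline essentially reconstructs it, and on the Lie-algebra side it is closer to Cornulier's original version: reduce by Sylow plus Schur--Zassenhaus to automorphisms stabilizing $P$ and $Q$; use the $Q$-weight decomposition with pairwise distinct degree-one weights to force a monomial action $t_i\mapsto t_{\sigma(i)}$ (your count showing that no layer $F^{[k]}/F^{[k+1]}$ with $2\le k\le n$ carries a degree-one weight when $p>n$ is correct, and it does kill the corrections $c_i$); then use the multidegrees of the relators $R_t$ --- doubled letter $t*t_1$, missing letter $t*t_n$ --- to force $\sigma$ to be a left translation. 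This is exactly the mechanism of the construction; Sambale carries it out on the group side via direct commutator computations (his Lemma 6) rather than via free Lie algebra theory.

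Two steps are asserted rather than proved, as you yourself flag, and the second is a genuine gap as written. (1) The identification of $F^{[k]}/F^{[k+1]}$, $k\le n<p$, with the degree-$k$ component of the free Lie algebra over $\F_p$ is the Lazard correspondence input (the free exponent-$p$ group of class $<p$ corresponds to the free nilpotent Lie $\F_p$-algebra of the same class); it is true but is a theorem, not a remark. (2) Your ``entrywise'' comparison does not follow from multidegree matching: Jacobi-type relations make differently ordered commutators linearly dependent in general (e.g.\ $[a,[b,c]]+[b,[c,a]]+[c,[a,b]]=0$), so you must actually rule out $[a,y_2,\ldots,y_{n-1},a]=c\,[a,x_2,\ldots,x_{n-1},a]$ for a nontrivial reordering of the single letters. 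This can be closed as follows: the right-normed multilinear commutators with fixed last letter $a$ are linearly independent (they form part of the standard basis of the multilinear component of the free Lie algebra), and $\mathrm{ad}(a)$ is injective on their span because its kernel is $\F_p a$ --- by Shirshov--Witt every subalgebra of a free Lie algebra is free, so abelian subalgebras are at most one-dimensional --- whence $\tau=\mathrm{id}$ and $c=1$; the same centralizer argument gives $R_t\neq 0$, which you also need. With these two lemmas supplied your proof is complete. Two smaller remarks: once entrywise matching holds for a single $t$, the missing-letter identity $\sigma(t*t_n)=t'*t_n$ already determines $\sigma$ as a translation on all of $T$, so your covering argument over varying $t$ (and hence this particular use of $|T|>2$) is redundant --- the hypothesis $|T|>2$ is instead needed because the relator $R_t$ degenerates for $n\le 2$; and the paper's convention $[g_1,\ldots,g_l]=[g_1,[g_2,\ldots,g_l]]$ is right-normed, not left-normed as you say, though this is immaterial.
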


Next we want to show that whenever $p>|T|+1>2$, the group $G=\CS(T,p)$ is centerless (i.e. $Z(G)=1$) and indecomposable (i.e. one cannot write $G=H\times K,\,1\proper H,K\proper G$). This will be needed in order to apply Theorem \ref{thm: inhol} later. First we need a couple of lemmas.

\begin{lem}\label{lem: FkFk1} The following statements hold.
\begin{enumerate}[(i)]
\item[(i)]$F^{[k]}/F^{[k+1]}$ is a finite elementary abelian $p$-group, with a basis consisting of (not necessarily all) elements of the form $[t_{i_1},\ldots,t_{i_k}]F^{[k+1]}$. If $k=1$ the basis is precisely $\{t_iF^{[2]}:\,1\le i\le n$\}.
\item[(ii)] The group $Q$ acts diagonally with respect to this basis.
\item[(iii)] If $p>k+1$ no basis element is fixed by $Q$.
\end{enumerate}
\end{lem}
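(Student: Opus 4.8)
The plan is to handle the three parts in order, with part (i) resting on standard facts about lower central series and parts (ii) and (iii) following from the commutator relation \eqref{eq: action of Q on commutator} together with elementary character theory of $Q=(\F_p^\times)^n$. For part (i), the assertion that $F^{[k]}/F^{[k+1]}$ is a finite elementary abelian $p$-group is already recorded in the excerpt, so what remains is to produce a spanning set, and hence a basis, of left-normed commutators. Here I would invoke the standard fact that for a group generated by $t_1,\ldots,t_n$, the quotient $\gamma_k/\gamma_{k+1}$ of its lower central series is generated by the images of the left-normed commutators $[t_{i_1},\ldots,t_{i_k}]$ of weight $k$; this follows from the identities $[xy,z]\equiv[x,z][y,z]$ and $[x,yz]\equiv[x,y][x,z]$ modulo the next term of the series, which let one expand an arbitrary weight-$k$ commutator into a product of left-normed ones. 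Since $F^{[k]}/F^{[k+1]}$ is a finite-dimensional $\F_p$-vector space, I then extract a basis from this spanning set of commutator images. For $k=1$ the quotient $F/F^{[2]}$ is the abelianization of the free exponent-$p$ group, namely $C_p^n$ with the images of $t_1,\ldots,t_n$ as a basis, giving the stated explicit description.

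For part (ii), note first that $Q$ acts by automorphisms of $F$ preserving each characteristic subgroup $F^{[k]}$, hence acts $\F_p$-linearly on $F^{[k]}/F^{[k+1]}$, the $\F_p$-vector space structure there being canonical and so respected by any group automorphism. By \eqref{eq: action of Q on commutator}, the basis element $[t_{i_1},\ldots,t_{i_k}]F^{[k+1]}$ is scaled by $a_{i_1}\cdots a_{i_k}$ under $a=(a_1,\ldots,a_n)\in Q$, so each basis vector is a simultaneous eigenvector for all of $Q$; thus $Q$ acts diagonally in this basis. For part (iii), fix such a basis element and let $m_i$ be the number of occurrences of $i$ among $i_1,\ldots,i_k$, so $\sum_{i=1}^n m_i=k$ and the scaling factor is $\prod_{i=1}^n a_i^{m_i}$. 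The element is fixed by all of $Q$ exactly when the character $a\mapsto\prod_i a_i^{m_i}$ is trivial, i.e. when $m_i\equiv 0\pmod{p-1}$ for every $i$, since $\F_p^\times$ is cyclic of order $p-1$. As $k\ge 1$ some $m_i\ge 1$, and $m_i\le k<p-1$ by the hypothesis $p>k+1$, whence $0<m_i<p-1$ and $m_i\not\equiv 0\pmod{p-1}$. Concretely, choosing a generator $g$ of $\F_p^\times$ and the element $a$ with $a_i=g$ and $a_j=1$ for $j\ne i$ scales the basis element by $g^{m_i}\ne 1$, so it is not fixed.

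The main obstacle is really the generation statement in part (i): once the weight-$k$ left-normed commutators are known to span $F^{[k]}/F^{[k+1]}$, parts (ii) and (iii) are direct consequences of \eqref{eq: action of Q on commutator} and the cyclicity of $\F_p^\times$. I would therefore cite a standard reference on the lower central series and basic commutators for the spanning fact rather than reprove it, and take care to note that the basis is chosen \emph{among} these commutators, so that it consists of $Q$-eigenvectors as required by parts (ii) and (iii).
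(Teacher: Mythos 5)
Your proposal is correct and takes essentially the same route as the paper: part (i) rests on the standard spanning fact for lower-central quotients by left-normed commutators (which the paper delegates to Sambale's paper, with the $k=1$ case via universality applied to $\F_p^n$, matching your abelianization argument), and parts (ii) and (iii) are read off from the scaling relation \eqref{eq: action of Q on commutator}, with triviality of the action forcing each generator's multiplicity $m_i$ to be divisible by $p-1$, impossible since $0<m_i\le k<p-1$. Your write-up simply spells out the details the paper compresses into a citation and two one-line remarks.
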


\begin{proof} For (i) see \cite[p. 6]{Sam24_1} (the second assertion of (i) is easily seen by applying the universality of $F/F^{[2]}$ as an abelian group of exponent $p$ to the group $\mathbb{F}_p^n$). Part (ii) follows from (\ref{eq: action of Q on commutator}). For (iii) use (\ref{eq: action of Q on commutator}) to conclude that the action of $Q$ on a basis element $[t_{i_1},\ldots,t_{i_k}]F^{[k+1]}$ is trivial if and only if each generator is repeated a multiple of $p-1$ times in the above commutator, which is impossible if $p>k+1$.\end{proof}

\begin{lem}\label{lem: centralizer semidirect} Let $M=A\rtimes B$ be an internal semidirect product of groups. Then
$$Z(M)=\left\{(a,b):\,b\in Z(B),\,a^B=\{a\},\, \con(ab)|_A=\mathrm{id}_A\right\}.$$ Here $\con(ab)\in\Inn(A\rtimes B)$ denotes conjugation by $ab$ and $a^B$ is the orbit of $a$ under conjugation by $B$.
\end{lem}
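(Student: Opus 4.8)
The plan is to exploit the fact that $M=AB$ is generated by $A\cup B$, so that an element lies in $Z(M)$ precisely when it commutes with every element of $A$ and with every element of $B$ separately. I write a general element of $M$ uniquely as $ab$ with $a\in A$ and $b\in B$, as permitted by the definition of an internal semidirect product, and I identify the pair $(a,b)$ with the product $ab$.

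First I would treat commutation with $A$. Because $A\trianglelefteq M$, conjugation by $ab$ carries $A$ into itself, so $ab$ commutes with every $a'\in A$ if and only if $\con(ab)|_A=\mathrm{id}_A$; this yields the third condition verbatim. Next I would treat commutation with $B$. For $b'\in B$ I compute $ab\cdot b'=a(bb')$ and $b'\cdot ab=(b'ab'^{-1})(b'b)$, the latter using that $b'a=(b'ab'^{-1})b'$ and that $b'ab'^{-1}\in A$ since $A$ is normal. Setting these two expressions equal and appealing to the uniqueness of the factorization $M=A\cdot B$, I compare $A$-parts and $B$-parts to obtain $b'ab'^{-1}=a$ and $bb'=b'b$ for every $b'\in B$. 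The first of these says exactly that $a^B=\{a\}$, and the second that $b\in Z(B)$.

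Combining the two analyses, $ab\in Z(M)$ if and only if $\con(ab)|_A=\mathrm{id}_A$, $a^B=\{a\}$, and $b\in Z(B)$, which is the asserted description of $Z(M)$. There is no substantial obstacle in this argument; the only point requiring care is the bookkeeping of the semidirect-product conventions, and in particular the use of the uniqueness of the decomposition $m=ab$ to split the identity $ab\cdot b'=b'\cdot ab$ into its independent $A$- and $B$-components.
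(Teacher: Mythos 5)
Your proof is correct and follows essentially the same route as the paper's: write each element uniquely as $ab$ with $a\in A$, $b\in B$, test commutation against $A$ and $B$ separately, and use the uniqueness of the factorization (equivalently $A\cap B=1$) to split the resulting identities into the three stated conditions. The only cosmetic difference is that you read off $\con(ab)|_A=\mathrm{id}_A$ directly from commutation with $A$, whereas the paper rewrites $aba'=a(ba'b^{-1})b$ first; the content is identical.
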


\begin{proof} Every element in $M$ can be written uniquely as $ab$ for $a\in A, b\in B$. The condition $ab\in Z(M)$ is equivalent to $aba'=a'ab, \, abb'=b'ab$ for all $a'\in A, b'\in B$. These can be rewritten as
$$a(ba'b^{-1})b=a'ab,\,abb'=(b'ab'^{-1})b'b.$$
Since $A\cap B=1$, the above are equivalent to
$$a'=(ab)^{-1}a'(ab),\,a=b'ab'^{-1},\,bb'=b'b.$$
The validity of these conditions for all $a'\in A,\,b'\in B$ precisely means that $\con(ab)|_A=\mathrm{id}_A,\,a^B=\{a\}$, and $b\in Z(B)$, as desired. \end{proof}

\begin{lem}\label{lem: CS is centerless} $Z(\CS(T,p))=1$ whenever $p>|T|+1>2$.\end{lem}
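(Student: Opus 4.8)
The plan is to apply Lemma \ref{lem: centralizer semidirect} to the internal semidirect product $\CS(T,p)=P\rtimes Q$, taking $A=P$ and $B=Q$. Since $Q\cong C_{p-1}^n$ is abelian we have $Z(Q)=Q$, so the first defining condition is vacuous and an element $(a,b)$ with $a\in P$, $b\in Q$ lies in the center precisely when (i) $a$ is fixed by the conjugation action of $Q$ --- which, in the semidirect product, is exactly the action $a\mapsto a^b$ of $Q$ on $P$ --- and (ii) $\con(ab)|_P=\mathrm{id}_P$. I would establish these two conditions in turn, first deducing $a=1$ from (i) and then $b=1$ from (ii).

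For (i): I would run an induction down the lower central series of $P$. Writing $P^{[k]}$ for the image of $F^{[k]}$ in $P$, one checks that $N\leq F^{[n]}$ (the generators of $N$ are $n$-fold commutators), whence the graded quotients are $P^{[k]}/P^{[k+1]}\cong F^{[k]}/F^{[k+1]}$ for $k<n$ and $P^{[n]}\cong F^{[n]}/NF^{[n+1]}$, with $P^{[n+1]}=1$. By Lemma \ref{lem: FkFk1}(ii),(iii), on each piece $F^{[k]}/F^{[k+1]}$ with $k\leq n$ the group $Q$ acts diagonally in a commutator basis with every character nontrivial (here the hypothesis $p>|T|+1=n+1$ guarantees $p>k+1$ for all relevant $k$), and a diagonal action with only nontrivial characters has no nonzero fixed vector. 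For the top piece $F^{[n]}/NF^{[n+1]}$ I would invoke Maschke's theorem --- valid because $|Q|=(p-1)^n$ is prime to $p$ --- so that this quotient of $F^{[n]}/F^{[n+1]}$ is again a sum of (a subset of the) nontrivial characters and hence also has no nonzero fixed vector. Now if $a\neq 1$ were $Q$-fixed, then taking the largest $k$ with $a\in P^{[k]}$ and passing to $P^{[k]}/P^{[k+1]}$ would produce a nonzero $Q$-fixed vector, a contradiction; so $a=1$.

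With $a=1$, condition (ii) reads $\con(b)|_P=\mathrm{id}_P$, i.e. $b\in Q$ acts trivially on $P$. Restricting this action to $F/F^{[2]}\cong\F_p^n$, on which $b=(a_1,\dots,a_n)$ scales the basis vector $t_iF^{[2]}$ by $a_i$, forces $a_i=1$ for every $i$, so $b=1$. Hence $Z(\CS(T,p))=1$. The only genuinely delicate point is the analysis of the top graded piece $F^{[n]}/NF^{[n+1]}$, where the quotient by $N$ could a priori create a trivial character; the semisimplicity afforded by Maschke's theorem is what rules this out, and everything else is the filtration bookkeeping together with the elementary linear algebra of diagonal actions.
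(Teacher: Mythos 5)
Your proposal is correct and takes essentially the same route as the paper: both reduce via Lemma \ref{lem: centralizer semidirect} to showing that $Q$ has no nontrivial fixed points in $P$ and acts faithfully on $P$ (faithfulness checked on $F/F^{[2]}$), and both argue along the filtration by images of the $F^{[k]}$ using the diagonal action with nontrivial characters from Lemma \ref{lem: FkFk1} under the hypothesis $p>n+1$. The only cosmetic difference is at the top piece $F^{[n]}/NF^{[n+1]}$, where you invoke Maschke's theorem (using $p\nmid|Q|$) while the paper implicitly uses that the image of $N$ is spanned by commutator eigenvectors so the quotient retains a diagonal basis of non-fixed elements; both arguments are valid.
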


\begin{proof} Write $G=\CS(T,p)=P\rtimes Q$ as in Definition \ref{def: cs}. By Lemma \ref{lem: centralizer semidirect} it is enough to show that $Q$ has no nontrivial fixed elements in $P$ and that it acts faithfully on $P$. We have a normal series
\begin{equation}\label{eq: P filtration}1\trianglelefteq F^{[n]}/NF^{[n+1]}\trianglelefteq F^{[n-1]}/NF^{[n+1]}\trianglelefteq\ldots\trianglelefteq F^{[1]}/NF^{[n+1]}=P.\end{equation} The action of $Q$ descends to the quotients of this series, which are $$F^{[n]}/NF^{[n+1]},\,F^{[n-1]}/F^{[n]},\,\ldots,F^{[1]}/F^{[2]},$$
and if $Q$ has a nontrivial fixed element in $P$ then it has a nontrivial fixed element in one of these quotients (if $1\neq x\in P$ is fixed by $Q$ look at the first subgroup in (\ref{eq: P filtration}) containing $x$ and the image of $x$ in the corresponding quotient). Moreover, if the action of $Q$ on one of these quotients is faithful, then so is its action on $P$. By Lemma \ref{lem: FkFk1} and the condition $p>|T|+1=n+1$, each of the above quotients is an elementary abelian $p$-group having a basis consisting of elements not fixed by $Q$, and $Q$ acts diagonally with respect to this basis. This implies that no nontrivial element of these quotients is fixed by $Q$ and that the action of $Q$ on $F^{[1]}/F^{[2]}$ is faithful, which concludes the proof.\end{proof}


\begin{lem}\label{lem: CS is indecomposable} $\CS(T,p)$ is indecomposable whenever $p>|T|>2$.\end{lem}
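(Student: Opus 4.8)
The plan is to exploit the fact that $G=\CS(T,p)=P\rtimes Q$ has $P$ as its \emph{unique} Sylow $p$-subgroup (since $P\trianglelefteq G$ and $|Q|=(p-1)^n$ is coprime to $p$), so $P$ is characteristic in $G$ and equals the set of all $p$-elements of $G$. First I would record two facts about the ``bottom layer'' $V:=P/[P,P]$. The image of $F^{[2]}$ in $P$ is $[P,P]$, and since $N\subseteq F^{[n]}\subseteq F^{[2]}$ with $n=|T|\ge 3$, there is a canonical $Q$-equivariant identification $V\cong F/F^{[2]}\cong\F_p^n$ on which $a=(a_1,\dots,a_n)\in Q$ acts by $\bar t_i\mapsto a_i\bar t_i$ (Lemma \ref{lem: FkFk1}). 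The $n$ characters $\chi_i\colon a\mapsto a_i$ of $Q$ are pairwise distinct, so $Q$ acts faithfully on $V$. Consequently $C_G(P)\leq P$: if $xq$ with $x\in P$, $q\in Q$ centralizes $P$, then $q$ acts on $P$ as the inner automorphism $\con(x^{-1})|_P$, hence trivially on $V$, forcing $q=1$. Thus $C_G(P)=Z(P)\leq P$.

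Now suppose toward a contradiction that $G=H\times K$ with $1\proper H,K\proper G$. A routine Sylow argument shows that $P_H:=H\cap P$ and $P_K:=K\cap P$ are the Sylow $p$-subgroups of $H,K$, are normal in $G$, commute elementwise, and satisfy $P=P_H\times P_K$. In a direct product one has $[P,P]=[P_H,P_H]\times[P_K,P_K]$, so passing to $V$ yields a $Q$-module decomposition $V=\bar P_H\oplus\bar P_K$, where $\bar P_H,\bar P_K$ denote the images of $P_H,P_K$. Because $V$ is a direct sum of the pairwise non-isomorphic $1$-dimensional $Q$-modules $L_i=\F_p\bar t_i$, every $Q$-submodule has the form $\bigoplus_{i\in S}L_i$ for some $S\subseteq\{1,\dots,n\}$; hence there is a partition $\{1,\dots,n\}=S\sqcup S^c$ with $\bar P_H=\bigoplus_{i\in S}L_i$ and $\bar P_K=\bigoplus_{i\in S^c}L_i$. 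I would first rule out the degenerate case: if, say, $S^c=\varnothing$, then $\bar P_K=0$, so $P_K=[P_K,P_K]$ and hence $P_K=1$ (being a $p$-group); then $K$ has order prime to $p$ and commutes elementwise with $H\supseteq P$, so $K\leq C_G(P)\leq P$, forcing $K=K\cap P=P_K=1$, contradicting $K\neq 1$.

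So both $S$ and $S^c$ are nonempty, and I pick $i\in S$, $j\in S^c$; this is the crux. Since $\bar t_i\in\bar P_H$ and $\bar t_j\in\bar P_K$, choose lifts $\tilde t_i\in P_H$ and $\tilde t_j\in P_K$ with $\tilde t_i\equiv t_i$ and $\tilde t_j\equiv t_j\pmod{[P,P]}$. As $P_H,P_K$ commute elementwise, $[\tilde t_i,\tilde t_j]=1$ in $P$. I would then reduce modulo the image of $F^{[3]}$: since $NF^{[n+1]}\subseteq F^{[3]}$ for $n\ge 3$, this quotient of $P$ is exactly $F/F^{[3]}$, a group of class $\le 2$ in which commutators are bi-additive and $[F^{[2]},F]=1$, whence $[\tilde t_i,\tilde t_j]\equiv[t_i,t_j]\pmod{F^{[3]}}$. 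Thus $[t_i,t_j]\in F^{[3]}$. But $[t_i,t_j]\notin F^{[3]}$: by the universal property of $F/F^{[3]}$ as the free class-$\le 2$ exponent-$p$ group on $t_1,\dots,t_n$, the assignment sending $t_i,t_j$ to the two noncommuting generators of the Heisenberg group over $\F_p$ (which has exponent $p$ as $p>2$) and $t_l\mapsto 1$ for $l\neq i,j$ extends to a homomorphism carrying $[t_i,t_j]$ to a nontrivial element. This contradiction shows that $G$ has no nontrivial direct decomposition.

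The main obstacle is precisely this crux step: translating a hypothetical direct decomposition into a partition of the generators into two elementwise-commuting blocks, and then contradicting it via the nonvanishing of $[t_i,t_j]$ in the class-$2$ quotient $F/F^{[3]}$. Everything else (the uniqueness of the Sylow $p$-subgroup, the faithfulness of $Q$ on $V$, the identity $[P,P]=[P_H,P_H]\times[P_K,P_K]$, and the classification of $Q$-submodules of the multiplicity-free module $V$) is routine bookkeeping.
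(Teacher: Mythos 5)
Your proof is correct, and its crux coincides with the paper's: both arguments extract from a hypothetical decomposition $G=H\times K$ a partition of the generator indices into two blocks whose lifts in $P$ commute elementwise, and then contradict this via $[t_i,t_j]\not\equiv 1\pmod{F^{[3]}}$, proved in both cases by mapping $F/F^{[3]}$ onto a nonabelian exponent-$p$ group of class $2$ (your Heisenberg group is the paper's unipotent upper-triangular group). Where you genuinely diverge is in the two supporting steps. For the partition, the paper works by hand: it takes $x\in H'=H\cap P$ and $yq\in K$, uses $y^{-1}xy=x^{q^{-1}}$ together with Lemma \ref{lem: FkFk1}(i) to force coordinates $a_{i_j}=1$, and thereby builds the index sets $L,M$ along with explicit descriptions of $\pi(H)$ and $\pi(K)$. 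You instead observe that $\bar P_H,\bar P_K$ are $Q$-submodules of $V=P/[P,P]\cong\F_p^n$, which is a semisimple (Maschke applies since $p\nmid |Q|=(p-1)^n$) multiplicity-free $\F_p Q$-module because the characters $\chi_i$ are pairwise distinct for $p\ge 5$; hence they are coordinate subspaces, and disjointness and covering of the two index sets come for free from $P=P_H\times P_K$. For the degenerate case, the paper (with $L=\emptyset$) shows by induction via \cite[Lemma 6]{Sam24_1} that surjectivity of $\overline K'\to F/F^{[2]}$ propagates to $\overline K'\to F/F^{[k]}$ for all $k$, giving $K'=P$ and $K=G$, contradicting properness; you instead note that $\bar P_K=0$ forces $P_K=[P_K,P_K]$, hence $P_K=1$ since a nontrivial $p$-group is never perfect, and then $K\le C_G(H)\le C_G(P)\le P$ — using your correct observation that $Q$ acts faithfully on $V$, so that $C_G(P)=Z(P)\le P$ — whence $K=K\cap P=P_K=1$, contradicting nontriviality. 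Your handling avoids the paper's generation/induction argument entirely at the modest cost of establishing $C_G(P)\le P$ first; the paper's version avoids that lemma but needs the explicit formulas for $\pi(H),\pi(K)$. All your intermediate claims check out under the stated hypothesis $p>|T|>2$ (in particular $n\ge 3$ justifies the identifications with $F/F^{[2]}$ and $F/F^{[3]}$, exactly as in the paper), so this is a valid, somewhat more conceptual rendering of the same proof.
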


\begin{proof} We use the notation of Definition \ref{def: cs}. Assume by way of contradiction that $\CS(T,p)=P\rtimes Q=H\times K$ for some proper normal subgroups $H, K$. Denote $H'=H\cap P,\,K'=K\cap P$. Since $P$ is the unique $p$-Sylow subgroup of $P\rtimes Q$, we have that $H',K'$ are the unique $p$-Sylow subgroups of $H,K$ respectively and $P=H'K'$.
In what follows we identify $P/(F^{[2]}/NF^{[n+1]})$ with $F/F^{[2]}$ and $P/(F^{[3]}/NF^{[n+1]})$ with $F/F^{[3]}$ in the natural way (recall (\ref{eq: P filtration}) and the assumption $n=|T|>2$). 
The congruences we write down below are well-defined in light of these identifications. 

Let $x\in H'$ and write (as one may by the case $k=1$ of Lemma \ref{lem: FkFk1}(i)) $x\equiv t_{i_1}^{\lambda_1}t_{i_2}^{\lambda_2}\cdots t_{i_r}^{\lambda_r}\pmod{F^{[2]}}$ with $i_1,\ldots,i_r$ distinct and $\lambda_i\in\F_p^\times$. Assume that $yq\in K$ for some $y\in P,\,q\in Q$. Since $K$ centralizes $H$ we have $xyq=yqx=y(qxq^{-1})q$ and therefore $y^{-1}xy=qxq^{-1}=x^{q^{-1}}$. It follows that
$$\prod_{j=1}^rt_{i_j}^{\lambda_j}\equiv x\equiv y^{-1}xy\equiv x^{q^{-1}}\equiv\prod_{j=1}^rt_{i_j}^{\lambda_j/a_{i_j}}\pmod{F^{[2]}},$$ where $q=(a_1,\ldots,a_n)$. 
Using the case $k=1$ of Lemma \ref{lem: FkFk1}(i) we conclude that $a_{i_j}=1,\,1\le j\le r$. Thus for any $(a_1,\ldots,a_n)\in\pi(K)$ ($\pi:P\rtimes Q\to Q$ is the projection to the second factor) we have $a_l=1$ for any index $l$ for which there exists an element $x\in H'$ such that the basis element $t_l\bmod F^{[2]}$ occurs (with a nonzero coefficient) in the expansion of $x\bmod F^{[2]}$ in the basis $t_1\bmod F^{[2]},\,\ldots\,,t_n\bmod F^{[2]}$. Denote the set of such indices $l$ by $L$.

Similarly, for any $(a_1,\ldots,a_n)\in\pi(H)$ we have $a_m=1$ for any index $m$ for which there exists an element $y\in K'$ such that the basis element $t_m\bmod F^{[2]}$ occurs (with a nonzero coefficient) in the expansion of $y\bmod F^{[2]}$ in the basis $t_1\bmod F^{[2]},\,\ldots\,,t_n\bmod F^{[2]}$. Denote the set of such indices $m$ by $M$.

Since $\pi(HK)=Q$ and $H'K'=P$, it follows that $L,M$ are disjoint (if $r\in L\cap M$ were to exist then the $r$-th coordinate of any $q\in \pi(HK)$ would always be $1$), $\{1,\ldots,n\}=L\cup M$ (if $r\not\in L\cup M$ then the natural surjection $P\rightarrow  F/F^{[2]}$ would not contain $t_r\bmod F^{[2]}$ in its image), and
\begin{equation}\label{eq: HK}\pi(H)=\{(a_1,\ldots,a_n):\,a_m=1\,\, \forall m\in M\},\quad\pi(K)=\{(a_1,\ldots,a_n):\,a_l=1\,\, \forall l\in L\},\end{equation}
\begin{equation}\label{eq: HK2} H'\bmod F^{[2]}=\left\{\prod_{l\in L}t_l^{a_l}\bmod F^{[2]}:\,a_l\in\F_p\right\},\quad K'\bmod F^{[2]}=\left\{\prod_{m\in M}t_m^{a_m}\bmod F^{[2]}:\,a_m\in\F_p\right\}.\end{equation}

First assume that $L,M\neq\emptyset$. Let $l\in L,m\in M$ and let $x\in H',y\in K'$ be such that $x\equiv t_l\pmod{F^{[2]}},\,y\equiv t_m\pmod{F^{[2]}}.$ By \cite[Lemma 6]{Sam24_1} we have $[x,y]\equiv [t_l,t_m]\not\equiv 1\pmod {F^{[3]}}$ (the last incongruence follows from the universality of $F/F^{[3]}$ as an exponent $p$ group of nilpotency class $2$ on the generators $t_1,\ldots,t_n$ and the fact that there is a nonabelian group of exponent $p$ and nilpotency class $2$, e.g. the group of unipotent $3\times 3$ upper triangular matrices over $\F_p$). But $H',K'$ centralize each other, so $[x,y]=1$, a contradiction.

Next assume $L=\emptyset$, in which case $M=\{1,\ldots,n\}$. From (\ref{eq: HK}),(\ref{eq: HK2}) we have $\pi(K)=Q$ and the projection $K'\to F/F^{[2]}$ is surjective. Denote by $\overline K'$ the preimage of $K'$ under the quotient map $F\to F/NF^{[n+1]}=P$. The projection $\overline K'\to F/F^{[2]}$ is surjective and it follows by induction on $k$ using \cite[Lemma 6]{Sam24_1} (which implies that $F^{[k-1]}/F^{[k]}$ is generated by the classes of $(k-1)$-fold commutators of elements from $\overline K'$) that the projection $\overline K'\to F/F^{[k]}$ is surjective for every $k$. In particular the projection $\overline K'\to F/F^{[n+1]}$ is surjective, and a fortiori the projection $\overline K'\to P$ is surjective, i.e. $K'=P$. Since $\pi(K)=Q$ we obtain that $K=P\rtimes Q$ is not a proper subgroup, a contradiction.

The case $M=\emptyset$ is handled similarly, so we obtain a contradiction in all cases, establishing the assertion of the lemma.
\end{proof}

To apply Theorem \ref{thm: inhol} to prove Theorem \ref{thm: main}, we will need one more property of the group $G=\CS(T,p)$.

\begin{lem}\label{lem: char 1} Let $M = (A\times A)\rtimes C_2$, where $A$ is a finite abelian group with $|A|>2$ and the action of the nontrivial element of $C_2$ on $A\times A$ is by swapping the coordinates. Then $A\times A$ is a characteristic subgroup of $M$.\end{lem}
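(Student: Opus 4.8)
My plan is to characterize $A\times A$ intrinsically as the \emph{unique abelian subgroup of index $2$} in $M$. Since any automorphism of $M$ permutes the subgroups of a given index and preserves the property of being abelian, such a characterization will immediately show that $A\times A$ is mapped to itself by every automorphism, i.e. is characteristic. To set this up I would first record the easy half: $A\times A$ is abelian (a product of abelian groups) and has index $2$, hence is one abelian index-$2$ subgroup, so only uniqueness needs proof.

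For uniqueness, let $\sigma$ denote the nontrivial element of $C_2$, so that conjugation by $\sigma$ swaps the two coordinates of $A\times A$, and suppose $H\le M$ is an abelian subgroup of index $2$ with $H\neq A\times A$. I would first note that $H\not\subseteq A\times A$ (otherwise equality of indices forces $H=A\times A$), so $H$ contains an element of the form $v\sigma$ with $v\in A\times A$. The key computation is that conjugation by $v\sigma$ acts on $A\times A$ \emph{exactly as $\sigma$ does}, namely by swapping the two coordinates: the factor $v$ contributes nothing because $A\times A$ is abelian. This is a one-line calculation using the given semidirect-product action.

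Next I would examine $K:=H\cap(A\times A)$. As $H$ and $A\times A$ are distinct subgroups of index $2$ (both normal), their product is all of $M$, and the product formula gives $|K|=|A|^2/2$. Since $H$ is abelian, the element $v\sigma\in H$ commutes with every element of $K$; by the previous step this forces each $(a,b)\in K$ to satisfy $(b,a)=(a,b)$, that is $a=b$. Hence $K$ lies in the diagonal $\{(a,a):a\in A\}$, which has order $|A|$. This is where the hypothesis enters, and it is the crux of the argument: the assumption $|A|>2$ yields $|A|^2/2>|A|$, contradicting $|K|=|A|^2/2$. Therefore no such $H$ exists, $A\times A$ is the unique abelian subgroup of index $2$, and the lemma follows.

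The step I expect to be the only real obstacle is the conceptual one of finding the right invariant characterization; once one sees that any element outside $A\times A$ acts by the coordinate swap, abelianness of a hypothetical competitor $H$ collapses $H\cap(A\times A)$ onto the small diagonal, and the counting inequality $|A|^2/2>|A|$ (precisely the content of $|A|>2$) does the rest. All the calculations involved are short and routine.
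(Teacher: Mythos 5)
Your proof is correct and follows essentially the same route as the paper: both characterize $A\times A$ as the unique abelian subgroup of index $2$, show that an element of a hypothetical competitor $H$ lying outside $A\times A$ conjugates $A\times A$ by the coordinate swap (the $A\times A$-component contributing nothing by abelianness), and derive a contradiction by trapping $H\cap(A\times A)$, of index $2$, inside the diagonal, of index $|A|>2$. Your counting via the product formula is just a slight repackaging of the paper's index argument, so there is nothing further to reconcile.
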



\begin{proof}
We regard $A\times A$ and $C_2$ as subgroups of $M$ and let us write elements of $M$ in the form $(a_1,a_2)c$ for $a_1,a_2\in A,\,c\in C_2$. We will show that $A\times A$ is the unique abelian subgroup of $M$ of index 2, which implies the assertion of the lemma. 

Let $B\leq M$ be an abelian subgroup of index 2. Assume by way of contradiction that $B\neq A\times A$. Then $D = B\cap (A\times A)$ is a subgroup of $A\times A$ of index $2$. Let $b\in B\setminus A\times A$. Then $b$ centralizes $D$ (since $B$ is abelian). Write $b=(a_1,a_2)i$ for $a_1,a_2\in A,\,1\neq i\in C_2$.

Since $A\times A$ is abelian, for any $(d_1,d_2)\in D$ we have
$$(d_1,d_2)=b^{-1}(d_1,d_2)b=i^{-1}(a_1,a_2)^{-1}(d_1,d_2)(a_1,a_2)i=i^{-1}(d_1,d_2)i=(d_2,d_1),$$ so $d_1=d_2$. Thus $D$ is contained in the diagonal $\Delta_A\leq A\times A$, which has index $|A|>2$ in $A\times A$. Since $[A\times A:D]=2$, we have arrived at a contradiction. This completes the proof.
\end{proof}



\begin{lem}\label{lem: char 2} Let $G=\CS(T,p)$. Then $\InHol(G)$ is a characteristic subgroup of $\langle\InHol(G),\inv_G\rangle$ whenever $p>|T|+1>2$.\end{lem}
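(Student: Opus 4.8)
The plan is to exploit the structure of $M:=\langle\InHol(G),\inv_G\rangle$ as a wreath-type product and to reduce, by quotienting out a characteristic subgroup, to the abelian case already settled in Lemma \ref{lem: char 1}.

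First I would record the internal structure of $M$. Since $p>|T|+1>2$, Lemma \ref{lem: CS is centerless} gives $Z(G)=1$, so $\lambda(G)\cap\rho(G)=\lambda(Z(G))=1$; as $\lambda(G)$ and $\rho(G)$ centralize one another, $N:=\InHol(G)=\lambda(G)\times\rho(G)\cong G\times G$ is an internal direct product. The inversion $\inv_G$ has order $2$ and satisfies $\inv_G\lambda(g)\inv_G=\rho(g)$, so conjugation by $\inv_G$ swaps the two direct factors; moreover $\inv_G\notin\InHol(G)$ because $G$ is nonabelian. Hence $M=N\rtimes\langle\inv_G\rangle=(\lambda(G)\times\rho(G))\rtimes C_2$, with the nontrivial element of $C_2$ acting by the coordinate swap.

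The key device is the elementary fact that if $W\trianglelefteq M$ is characteristic and $N/W$ is characteristic in $M/W$, then $N$ is characteristic in $M$. I would apply this with $W:=\lambda(P)\rho(P)$, the image of $P\times P$ inside $\lambda(G)\times\rho(G)$, where $G=P\rtimes Q$ and $Q\cong C_{p-1}^n$ as in Definition \ref{def: cs}. Since $P$ is the normal Sylow $p$-subgroup of $G$ and $|Q|=(p-1)^n$ is coprime to $p$, the subgroup $W\cong P\times P$ is a normal $p$-subgroup of $M$ (it is normalized by $\inv_G$, which merely swaps its two factors) whose order $|P|^2$ equals the full $p$-part of $|M|=2|G|^2$. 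Therefore $W$ is the unique Sylow $p$-subgroup of $M$ and hence is characteristic. Passing to the quotient, $N/W\cong (G/P)\times(G/P)\cong Q\times Q$ with $\inv_G$ still inducing the coordinate swap, so $M/W\cong (Q\times Q)\rtimes C_2$ is exactly the situation of Lemma \ref{lem: char 1} with the finite abelian group $A=Q$. Because $n=|T|\ge 3$ and $p\ge 5$ we have $|Q|=(p-1)^n>2$, so Lemma \ref{lem: char 1} applies and shows $N/W=Q\times Q$ is characteristic in $M/W$. Combined with $W$ characteristic in $M$, the preimage principle then yields that $N=\InHol(G)$ is characteristic in $M$, as desired.

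The computations involved are all light; the only points requiring genuine care — and the place where the argument would break if the hypotheses were weakened — are (i) the identification $\InHol(G)\cong G\times G$, which really uses $Z(G)=1$, and (ii) checking that $M/W$ matches Lemma \ref{lem: char 1} on the nose, i.e. that the induced $C_2$ acts as the swap on $Q\times Q$ and that $|Q|>2$. I expect the main conceptual step to be recognizing that quotienting by the characteristic Sylow $p$-subgroup $W$ collapses the nonabelian $G$ down to the abelian $Q$, thereby reducing the whole lemma to the already-established abelian case.
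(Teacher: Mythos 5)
Your proposal is correct and matches the paper's own proof essentially step for step: both identify $\InHol(G)=\lambda(G)\times\rho(G)$ via $Z(G)=1$, take $W=\lambda(P)\rho(P)$ as the unique (normal) Sylow $p$-subgroup of $M=\langle\InHol(G),\inv_G\rangle$, and reduce to Lemma \ref{lem: char 1} applied to $M/W\cong(Q\times Q)\rtimes C_2$ with $A=Q$. One small slip: the hypothesis $p>|T|+1>2$ gives only $|T|\ge 2$, not $|T|\ge 3$, but this is harmless since $p\ge 5$ alone forces $|Q|=(p-1)^{|T|}\ge 4>2$.
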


\begin{proof} Since the group $G$ is centerless by Lemma \ref{lem: CS is centerless}, we have $\InHol(G)=\lambda(G)\times\rho(G)$. Now, conjugation by $\inv_G$ swaps $\lambda(G)$ and $\rho(G)$: more precisely, we have $\lambda(g)\inv_G = \inv_G\rho(g)$ for any $g\in G$. This means that $\inv_G$ normalizes $\InHol(G)$ and therefore $\langle \InHol(G),\inv_G\rangle=\InHol(G)\rtimes\langle\inv_G\rangle$. Since $\inv_G$ is an involution, in particular $[\langle \InHol(G),\inv_G\rangle:\InHol(G)]=2$. 

We may write $G=P\rtimes Q$ where $P$ is the unique (since it is normal) $p$-Sylow subgroup of $G$ and $Q\cong C_{p-1}^{|T|}$. Then $\lambda(P)\rho(P)$ is the unique  $p$-Sylow subgroup of $\langle\InHol(G),\inv_G\rangle$ (it is normal because conjugation by $\inv_G$ swaps $\lambda(P)$ and $\rho(P)$) and is therefore a characteristic subgroup of $\langle\InHol(G),\inv_G\rangle$. Thus it remains to show that $\InHol(G)/\lambda(P)\rho(P)$ is characteristic in $\langle\InHol(G),\inv_G\rangle/\lambda(P)\rho(P)\isom (Q\times Q)\rtimes C_2$ (the action of $1\neq i\in C_2$ is by swapping the coordinates). But under the above isomorphism $\InHol(G)/\lambda(P)\rho(P)$ corresponds to $Q\times Q$, which is characteristic in $(Q\times Q)\rtimes C_2$ by Lemma \ref{lem: char 1}. This completes the proof.\end{proof}

\section{Normalizer of the inner holomorph}
\label{sec: inhol}

In what follows let $G$ be a finite group. A subgroup $R\le \Sym(G)$ is said to be \textit{regular} if its natural action on $G$ is regular, or equivalently, if the map
$$ R\to G:\quad \sigma\mapsto \sigma(1_G)$$
is bijective. Regular subgroups of $\Sym(G)$ come in pairs in some sense because if $R$ is a regular subgroup of $\Sym(G)$, then so is its centralizer $C_{\Sym(G)}(R)$. Moreover $C_{\Sym(G)}(R)\cong R$ and $C_{\Sym(G)}(C_{\Sym(G)}(R)) = R$. Of course, if $R$ is abelian, then $C_{\Sym(G)}(R) =R$ and we do not have a genuine pair of regular subgroups. All of these facts are easy to verify or one can see \cite[\S3]{Kohl4p}. For example, $\lambda(G)$ and $\rho(G)$ are regular subgroups of $\Sym(G)$ isomorphic to $G$, and they are centralizers of each other. It is well-known that isomorphic regular subgroups are conjugates in $\Sym(G)$ (a proof can be found in \cite[Lemma 2.1]{CT}). In particular, the regular subgroups of $\Sym(G)$ that are isomorphic to $G$ are exactly the conjugates of $\lambda(G)$. For example, $\lambda(G)$ and $\rho(G)$ are conjugates via the inversion map $\inv_G$.

With the above observations, we can give a characterization of when the equality $N_{\Sym(G)}(\InHol(G)) = \langle \Hol(G),\inv_G\rangle$ holds in terms of regular subgroups, as follows.

\begin{lem}\label{prop:characterization} We have $N_{\Sym(G)}(\InHol(G)) = \langle \Hol(G),\inv_G\rangle$ if and only if $\lambda(G)$ and $\rho(G)$ are the only regular subgroups $R\le\InHol(G)$ isomorphic to $G$ for which $C_{\Sym(G)}(R)\leq \InHol(G)$.
\end{lem}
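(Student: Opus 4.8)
The plan is to reformulate both sides of the equivalence as statements about a single conjugation action and then compare orbit sizes. Write $N = N_{\Sym(G)}(\InHol(G))$ and let $\mathcal{R}$ denote the set of regular subgroups $R\le\InHol(G)$ with $R\cong G$ and $C_{\Sym(G)}(R)\le\InHol(G)$. Note that $\lambda(G),\rho(G)\in\mathcal{R}$ and that $\Hol(G)$ and $\inv_G$, hence all of $\langle\Hol(G),\inv_G\rangle$, lie in $N$. The central claim I would establish is that $\mathcal{R}$ is exactly the orbit of $\lambda(G)$ under the conjugation action of $N$ on the subgroups of $\InHol(G)$ (this action is well defined because $N$ normalizes $\InHol(G)$).

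That this orbit is contained in $\mathcal{R}$ is straightforward: for $\sigma\in N$ the conjugate $\sigma\lambda(G)\sigma^{-1}$ is again a regular subgroup of $\Sym(G)$ isomorphic to $G$, it lies in $\InHol(G)$ since $\sigma$ normalizes $\InHol(G)$, and its centralizer $\sigma C_{\Sym(G)}(\lambda(G))\sigma^{-1}=\sigma\rho(G)\sigma^{-1}$ likewise lies in $\InHol(G)$; hence $\sigma\lambda(G)\sigma^{-1}\in\mathcal{R}$.

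The reverse inclusion is the crux, and I expect it to be the main obstacle. Given $R\in\mathcal{R}$, set $C=C_{\Sym(G)}(R)$; by the pairing facts recalled above, $C$ is regular, $C\cong G$, and $R=C_{\Sym(G)}(C)$, so $R$ and $C$ centralize each other with $R\cap C=Z(R)\cong Z(G)$. A cardinality count then gives $|RC|=|R|\,|C|/|R\cap C|=|G|^2/|Z(G)|=|\InHol(G)|$ (the last equality since $\InHol(G)=\lambda(G)\rho(G)$ with $\lambda(G)\cap\rho(G)=\lambda(Z(G))$), and since $RC\le\InHol(G)$ we conclude $RC=\InHol(G)$. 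Now write $R=\sigma\lambda(G)\sigma^{-1}$ for some $\sigma\in\Sym(G)$, which is possible because isomorphic regular subgroups are conjugate in $\Sym(G)$; taking centralizers gives $C=\sigma\rho(G)\sigma^{-1}$, so $\sigma$ conjugates $\InHol(G)=\lambda(G)\rho(G)$ onto $RC=\InHol(G)$. Therefore $\sigma\in N$ and $R$ lies in the $N$-orbit of $\lambda(G)$, proving that $\mathcal{R}$ equals this orbit.

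To finish, I would apply orbit--stabilizer. The stabilizer of $\lambda(G)$ in $N$ is $N\cap N_{\Sym(G)}(\lambda(G))=N\cap\Hol(G)=\Hol(G)$ (using $\Hol(G)\le N$), so $[N:\Hol(G)]=|\mathcal{R}|$. The same computation as in the first inclusion shows that the orbit of $\lambda(G)$ under $\langle\Hol(G),\inv_G\rangle$ is exactly $\{\lambda(G),\rho(G)\}$, since $\Hol(G)$ fixes both $\lambda(G)$ and $\rho(G)$ while $\inv_G$ swaps them, and the stabilizer is again $\Hol(G)$; thus $[\langle\Hol(G),\inv_G\rangle:\Hol(G)]=|\{\lambda(G),\rho(G)\}|$. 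Since $\langle\Hol(G),\inv_G\rangle\le N$ and both subgroups contain $\Hol(G)$ with finite index, $N=\langle\Hol(G),\inv_G\rangle$ holds if and only if these two indices agree, i.e. if and only if $|\mathcal{R}|=|\{\lambda(G),\rho(G)\}|$; and because $\{\lambda(G),\rho(G)\}\subseteq\mathcal{R}$ always holds, this is equivalent to $\mathcal{R}=\{\lambda(G),\rho(G)\}$, which is exactly the asserted equivalence.
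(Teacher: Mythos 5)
Your proof is correct, and it rests on the same three ingredients as the paper's own argument: isomorphic regular subgroups of $\Sym(G)$ are conjugate, the centralizer pairing $C_{\Sym(G)}(\lambda(G))=\rho(G)$, and $\Hol(G)=N_{\Sym(G)}(\lambda(G))$. The organization, however, is genuinely different. The paper works pointwise in the permutation $\pi$: since $\InHol(G)=\lambda(G)\rho(G)$, it observes that $\pi\in N_{\Sym(G)}(\InHol(G))$ if and only if $\pi^{-1}\lambda(G)\pi$ and its centralizer $\pi^{-1}\rho(G)\pi$ both land in $\InHol(G)$, and then identifies $\pi^{-1}\lambda(G)\pi=\lambda(G)$ with $\pi\in\Hol(G)$ and $\pi^{-1}\lambda(G)\pi=\rho(G)$ with $\pi\in\Hol(G)\inv_G$, so the equivalence falls out with no counting at all. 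You instead start from an abstract $R\in\mathcal{R}$ and must reconstruct a conjugator $\sigma$ and verify $\sigma\in N$; this forces your one extra substantive step, the intrinsic order count $|RC|=|R|\,|C|/|R\cap C|=|G|^2/|Z(G)|=|\InHol(G)|$ via $R\cap C=Z(R)$, yielding $RC=\InHol(G)$ (the paper gets the same identity for free, since for its $\pi$ the product $R\cdot C_{\Sym(G)}(R)$ is just $\pi^{-1}\InHol(G)\pi$, and a conjugate of a finite group contained in it must equal it). Your orbit--stabilizer finish, comparing $[N:\Hol(G)]=|\mathcal{R}|$ with $[\langle\Hol(G),\inv_G\rangle:\Hol(G)]=|\{\lambda(G),\rho(G)\}|$, replaces the paper's coset decomposition $\langle\Hol(G),\inv_G\rangle=\Hol(G)\cup\Hol(G)\inv_G$, and it buys a clean structural statement the paper leaves implicit: $\mathcal{R}$ is a single orbit of $N_{\Sym(G)}(\InHol(G))$ acting by conjugation, with stabilizer $\Hol(G)$. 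Both arguments are valid for arbitrary finite $G$, including the abelian case, where $\lambda(G)=\rho(G)$ and $\inv_G\in\Hol(G)$; there your two indices both equal $1$ and the equivalence degenerates consistently.
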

\begin{proof} Since $\InHol(G) = \lambda(G)\rho(G)$, for any $\pi \in \Sym(G)$ it is clear that
\begin{align*}
    \pi \in N_{\Sym(G)}(\InHol(G)) & \iff \pi^{-1} \lambda(G)\pi,\pi^{-1}\rho(G)\pi\leq \InHol(G)\\
    & \iff \pi^{-1}\lambda(G)\pi,C_{\Sym(G)}(\pi^{-1}\lambda(G)\pi)\leq \InHol(G).
\end{align*}
Moreover, since $\Hol(G) = N_{\Sym(G)}(\lambda(G))$ and $\inv_G$ swaps $\inv_G^{-1} \lambda(G)\inv_G= \rho(G)$, we have
\begin{align*}
    \pi^{-1} \lambda(G)\pi = \lambda(G) &\iff \pi \in \Hol(G),\\
    \pi^{-1}\lambda(G)\pi = \rho(G)&\iff \pi \in \Hol(G)\inv_G.
\end{align*}
We now deduce the assertion of the lemma because the regular subgroups of $\Sym(G)$ isomorphic to $G$ are precisely the conjugates of $\lambda(G)$.
\end{proof}

In the case that $G$ is centerless, the product
$$\InHol(G) = \lambda(G)\rho(G) = \lambda(G)\times \rho(G)$$
is direct, and the regular subgroups of $\InHol(G)$ (not necessarily isomorphic to $G$) may be parametrized in terms of these so-called fixed point free pairs of homomorphisms. We will not need it but let us remark that there is an extension of this result to all regular subgroups of $\Hol(G)$ in \cite[Proposition 2.5]{CT1}. Below let $N$ denote a finite group of the same order as $G$.

\begin{defi}
    A pair $f,g : N \to G$ of homomorphisms is said to be \textit{fixed point free} if $f(x) = g(x)$ has no solutions other than $x = 1_N$. 
\end{defi}

\begin{lem}\label{lem:regular}
For any fixed point free pair $f,g : N \to G$ of homomorphisms, the set
$$R_{(f,g)} = \{\lambda(f(x))\rho(g(x)) : x\in N\}$$
is a regular subgroup of $\InHol(G)$ isomorphic to $N$. In the case that $G$ is centerless, every regular subgroup of $\InHol(G)$ isomorphic to $N$ arises in this way.
\end{lem}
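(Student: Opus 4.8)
\medskip

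The plan is to verify the two assertions separately. First I would check that $R_{(f,g)}$ is a regular subgroup isomorphic to $N$ for any fixed point free pair. The natural candidate for the isomorphism is the map $\Phi:N\to\Sym(G)$ given by $\Phi(x)=\lambda(f(x))\rho(g(x))$. Since $\lambda$ and $\rho$ are homomorphisms and $\lambda(G)$ commutes with $\rho(G)$ elementwise, one computes directly that $\Phi(x)\Phi(y)=\lambda(f(x))\rho(g(x))\lambda(f(y))\rho(g(y))=\lambda(f(xy))\rho(g(xy))=\Phi(xy)$, so $\Phi$ is a homomorphism and $R_{(f,g)}=\Phi(N)$ is a subgroup of $\InHol(G)$. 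For injectivity, note that $\Phi(x)$ sends $1_G$ to $f(x)g(x)^{-1}$ (since $\lambda(f(x))\rho(g(x))(1_G)=f(x)\cdot 1_G\cdot g(x)^{-1}$), so $\Phi(x)=\mathrm{id}$ forces $\Phi(x)(1_G)=1_G$, i.e. $f(x)=g(x)$, which by the fixed point free hypothesis gives $x=1_N$. The same computation shows the orbit map $\sigma\mapsto\sigma(1_G)$ on $R_{(f,g)}$ is $x\mapsto f(x)g(x)^{-1}$, which is injective by the fixed point free condition and hence bijective onto $G$ by the order hypothesis $|N|=|G|$; thus $R_{(f,g)}$ is regular and $\Phi$ is an isomorphism $N\isom R_{(f,g)}$.

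\medskip

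For the converse, assume $G$ is centerless and let $R\leq\InHol(G)$ be an arbitrary regular subgroup isomorphic to $N$. The key point is that since $G$ is centerless, $\InHol(G)=\lambda(G)\times\rho(G)$ is an internal direct product, so every element of $R$ is uniquely of the form $\lambda(u)\rho(v)$ for some $u,v\in G$. I would define $f,g:R\to G$ by extracting these coordinates, i.e. writing each $\sigma\in R$ as $\sigma=\lambda(f(\sigma))\rho(g(\sigma))$; because the direct product decomposition is respected by multiplication, $f$ and $g$ are the compositions of the inclusion $R\hookrightarrow\lambda(G)\times\rho(G)$ with the two coordinate projections followed by $\lambda^{-1},\rho^{-1}$, hence are homomorphisms. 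Transporting along the given isomorphism $N\isom R$ yields homomorphisms $f,g:N\to G$. It remains to check two things: that $(f,g)$ is fixed point free and that $R=R_{(f,g)}$. For regularity of $R$, the orbit map $\sigma\mapsto\sigma(1_G)=f(\sigma)g(\sigma)^{-1}$ must be injective, which means $f(\sigma)=g(\sigma)$ implies $\sigma=\mathrm{id}$, i.e. the pair is fixed point free; and by construction $R=\{\lambda(f(x))\rho(g(x)):x\in N\}=R_{(f,g)}$.

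\medskip

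The main obstacle, and the only place where centerlessness is essential, is the uniqueness of the decomposition $\sigma=\lambda(u)\rho(v)$ needed to make $f$ and $g$ well-defined single-valued maps; this is exactly the statement that $\lambda(G)\cap\rho(G)=1$, which holds precisely because $\lambda(G)\cap\rho(G)=\lambda(Z(G))$ and $Z(G)=1$. I would record this intersection computation explicitly, since without it the coordinate functions are ambiguous and the parametrization breaks down. Everything else is a routine unwinding of the definitions of $\lambda$, $\rho$, and the regularity criterion via the orbit map at $1_G$.
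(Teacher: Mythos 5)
Your proof is correct. Note, though, that the paper does not prove this lemma at all: its entire ``proof'' is the citation \emph{See \cite[\S2 and Proposition 6]{ByottChilds}}, so what you have written is a self-contained replacement for an outsourced argument, and it is essentially the standard one underlying that reference. Both directions check out: the forward direction correctly uses only that $\lambda(G)$ and $\rho(G)$ centralize each other (no centerlessness needed, matching the statement), with the orbit computation $\bigl(\lambda(f(x))\rho(g(x))\bigr)(1_G)=f(x)g(x)^{-1}$ and the observation that injectivity of $x\mapsto f(x)g(x)^{-1}$ reduces, via the homomorphism property, to the fixed point free condition applied to $y^{-1}x$, with bijectivity then following from $|N|=|G|$ (which the paper's standing hypothesis on $N$ supplies). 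For the converse you correctly isolate the one place centerlessness is essential: $\lambda(G)\cap\rho(G)=\lambda(Z(G))=1$, which makes $\InHol(G)=\lambda(G)\times\rho(G)$ an internal direct product so that the coordinate maps $f,g$ are well-defined homomorphisms, after which fixed point free-ness of the pair falls out of injectivity of the orbit map on the regular subgroup $R$, and $R=R_{(f,g)}$ by construction after transporting along an isomorphism $N\isom R$. The only stylistic quibble is that in the converse you briefly conflate maps defined on $R$ with maps defined on $N$ before the transport step; making the composition with the isomorphism $\theta\colon N\isom R$ explicit from the start would tighten the write-up, but the mathematics is complete.
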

\begin{proof}
See \cite[\S2 and Proposition 6]{ByottChilds}.
\end{proof}

Here we are interested in the regular subgroups of $\InHol(G)$ whose centralizer also lies inside $\InHol(G)$.

\begin{lem}\label{lem:centralizer} Let $f,g:N\to G$ and $f',g':N\to G$ be two fixed point free pairs of homomorphisms. In the case that $G$ is centerless, we have $R_{(f',g')} = C_{\Sym(G)}(R_{(f,g)})$ if and only if $[f(N),f'(N)]=[g(N),g'(N)]=1$.
\end{lem}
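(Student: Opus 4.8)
The goal is to characterize when $R_{(f',g')}$ equals the full centralizer $C_{\Sym(G)}(R_{(f,g)})$. The plan is to exploit the special structure of $\InHol(G)=\lambda(G)\times\rho(G)$ in the centerless case, where $\lambda(G)$ and $\rho(G)$ are regular subgroups that centralize each other and satisfy $C_{\Sym(G)}(\lambda(G))=\rho(G)$ and $C_{\Sym(G)}(\rho(G))=\lambda(G)$. First I would compute the centralizer of a single generator $\lambda(f(x))\rho(g(x))$ of $R_{(f,g)}$ inside $\InHol(G)$, and then inside all of $\Sym(G)$. The key point is that both $R_{(f,g)}$ and $R_{(f',g')}$ are regular subgroups of $\InHol(G)$ isomorphic to groups of order $|G|$, so by the pairing facts recalled before the lemma, $C_{\Sym(G)}(R_{(f,g)})$ is automatically a regular subgroup isomorphic to $R_{(f,g)}$, hence has order $|G|$, the same as $R_{(f',g')}$. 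This means the desired equality $R_{(f',g')}=C_{\Sym(G)}(R_{(f,g)})$ will follow once I show the containment $R_{(f',g')}\leq C_{\Sym(G)}(R_{(f,g)})$, since both sides are regular of the same cardinality and a regular subgroup contained in another of equal order must coincide with it.

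With that reduction in hand, the heart of the argument is to translate the commuting condition $[R_{(f,g)},R_{(f',g')}]=1$ into the stated equalities $[f(N),f'(N)]=[g(N),g'(N)]=1$. For arbitrary $x,y\in N$ I would expand the commutator of $\lambda(f(x))\rho(g(x))$ and $\lambda(f'(y))\rho(g'(y))$. Since $\lambda(G)$ and $\rho(G)$ commute elementwise, this commutator factors as a product of a $\lambda$-part and a $\rho$-part: using that $\lambda$ and $\rho$ are homomorphisms (with $\rho$ anti-homomorphic up to the inverse convention built into its definition), the $\lambda$-component is $\lambda([f(x),f'(y)])$ and the $\rho$-component is $\rho$ of the corresponding commutator in $g$. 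Because $\lambda$ and $\rho$ are \emph{faithful} (as $G$ is centerless, or simply because they are regular representations), the commutator of the two generators is trivial if and only if $[f(x),f'(y)]=1$ and $[g(x),g'(y)]=1$. Ranging over all $x,y\in N$ gives exactly $[f(N),f'(N)]=1$ and $[g(N),g'(N)]=1$.

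I expect the main obstacle to be bookkeeping with the two regular representations and their opposite multiplication conventions: since $\rho(g)=(x\mapsto xg^{-1})$, one must track the inverse carefully so that the $\rho$-part of the commutator comes out as $\rho$ of a genuine commutator in $G$ rather than some twisted expression, and one must confirm that the cross terms mixing $\lambda$ and $\rho$ vanish using $[\lambda(G),\rho(G)]=1$. The other place requiring care is justifying that $R_{(f',g')}\leq C_{\Sym(G)}(R_{(f,g)})$ together with equality of orders forces equality of the two subgroups; this uses that both are regular, so each has order exactly $|G|$, and a subgroup of order $|G|$ inside a group of order $|G|$ is the whole group. Assembling these pieces yields the biconditional, with the forward direction coming from the containment-plus-cardinality reduction and the reverse direction from the explicit commutator computation.
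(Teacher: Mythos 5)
Your proposal is correct and follows essentially the same route as the paper: the paper likewise compares the products $\lambda(f(x))\rho(g(x))\cdot\lambda(f'(y))\rho(g'(y))$ in the two orders, uses $\lambda(G)\cap\rho(G)=1$ to separate the $\lambda$- and $\rho$-components, and upgrades elementwise commuting to equality with the full centralizer via the regularity-and-cardinality facts recalled at the start of \S 3. One small correction to your justification: the separation of components rests on $\lambda(G)\cap\rho(G)=\lambda(Z(G))=1$, not on faithfulness --- the regular representations are always faithful, yet $\lambda(z)\rho(z)=\mathrm{id}$ for every $z\in Z(G)$, so your parenthetical ``or simply because they are regular representations'' would not suffice without the centerless hypothesis, which is exactly where $Z(G)=1$ enters the paper's argument.
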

\begin{proof} For any $x,y\in N$, note that
\begin{align*}
\lambda(f(x))\rho(g(x))\cdot\lambda(f'(y))\rho(g'(y))&=\lambda(f(x)f'(y))\rho(g(x)g'(y))\\
\lambda(f'(y))\rho(g'(y))\cdot \lambda(f(x))\rho(g(x)) & = \lambda(f'(y)f(x)) \rho(g'(y)g(x))
\end{align*}
because $\lambda(G)$ and $\rho(G)$ centralize each other. Since $G$ is centerless, we have $\lambda(G)\cap \rho(G)=1$, and the above elements are equal if and only if $f(x)f'(y) = f'(y)f(x),\, g(x)g'(y) = g'(y)g(x)$. It follows that $R_{(f',g')}$ and $R_{(f,g)}$ are centralizers of each other if and only if $[f(N),f'(N)]=[g(N),g'(N)]=1$.
\end{proof}

\begin{proof}[Proof of Theorem \ref{thm: inhol}]
First suppose that $G$ is decomposable. Then $G = H\times K$ for some proper nontrivial normal subgroups $H$ and $K$. Clearly $f,g:G\to G$ and symmetrically $g,f:G\to G$ defined by
$$f(hk) = h,\quad g(hk) = k\quad (h\in H,k\in K)$$
are fixed point free pairs of homomorphisms. Then $R_{(f,g)}$ and $R_{(g,f)}$ are regular subgroups of $\InHol(G)$ that are isomorphic to $G$ by Lemma \ref{lem:regular}, and they are neither $\lambda(G)$ nor $\rho(G)$ by the assumption on $H,K$. Since $f(G) = H$ and $g(G) = K$ centralize each other, we have $C_{\Sym(G)}(R_{(f,g)})=R_{(g,f)}\leq\InHol(G)$ by Lemma \ref{lem:centralizer}, and so $N_{\Sym(G)}(\InHol(G))$ strictly contains $\langle\Hol(G),\inv_G\rangle$  by Lemma \ref{prop:characterization}.

Next suppose that $N_{\Sym(G)}(\InHol(G))$ strictly contains $\langle\Hol(G),\inv_G\rangle$. Then $\InHol(G)$ contains a regular subgroup $R$ isomorphic to $G$ that is neither $\lambda(G)$ nor $\rho(G)$ for which $C_{\Sym(G)}(R)$ also lies in $\InHol(G)$, by Lemma \ref{prop:characterization}. We also know from Lemma \ref{lem:regular} that $R = R_{(f,g)}$ and $C_{\Sym(G)}(R) = R_{(f',g')}$ for some fixed point free pairs $f,g:G\to G$ and $f',g':G\to G$ of homomorphisms. Moreover, we have
\begin{equation}\label{[]}
[f(G),f'(G)] = [g(G),g'(G)]=1
\end{equation}
by Lemma \ref{lem:centralizer}. It is a consequence of fixed point free-ness that
$$\ker(f)\cap \ker(g) = \ker(f')\cap \ker(g') = 1,$$
and as shown in \cite[Proposition 1]{ByottChilds}, that
$$G =f(G)g(G) = f'(G)g'(G).$$
Since $G$ is centerless, the above and (\ref{[]}) yield that 
$$ f(G)\cap g(G) = f'(G)\cap g'(G) = 1.$$ 
But then we get the decomposition $G = \ker(f)\times \ker(g)$ because 
\begin{align*}
|\ker(f)\ker(g)| & =|\ker(f)||\ker(g)|\\
&=|G|/|f(G)| \cdot |G|/|g(G)|\\
&=|G|^2/|f(G)g(G)|\\
& = |G|.
\end{align*}
The assumption $R\neq\lambda(G),\rho(G)$ implies that $f$ and $g$ are both nontrivial, so $\ker(f)$ and $\ker(g)$ are proper normal subgroups of $G$. It follows that $G=\ker(f)\times\ker(g)
$ is decomposable.
\end{proof}

\section{Derivation of Theorem \ref{thm: main} and Corollary \ref{cor: main}}

\begin{proof}[Proof of Theorem \ref{thm: main}]
\label{sec: proof main}

If $T=1$ we may take $n=1,\,H=S_1$, and if $T\cong C_2$ we may take $n=2,\,H=1$. Hence we may assume $|T|>2$. Pick a prime $p>|T|+1$ and denote $G=\CS(T,p)$. By Theorem \ref{thm: cs} we have $\Out(G)\cong T$. By Lemmas \ref{lem: CS is centerless} and \ref{lem: CS is indecomposable} the group $G$ is centerless and indecomposable, so Theorem \ref{thm: inhol} applies and we have $N_{\Sym(G)}(\InHol(G))=\langle \Hol(G),\inv_G\rangle$. Take $$H=\langle \InHol(G),\inv_G\rangle\leq\Sym(G).$$
We will show that $N_{\Sym(G)}(H)/H\cong T$, from which the assertion of Theorem \ref{thm: main} follows immediately. 

Since $\InHol(G)$ is a characteristic subgroup of $H$ by Lemma \ref{lem: char 2}, we have
$$N_{\Sym(G)}(H)\leq N_{\Sym(G)}(\InHol(G))=\langle \Hol(G),\inv_G\rangle.$$
The reverse inclusion is obvious, so we have equality $N_{\Sym(G)}(H)=\langle\Hol(G),\inv_G\rangle$. Moreover, since $\inv_G$ normalizes $\Hol(G)$ and $\InHol(G)$ we have $$\langle\Hol(G),\inv_G\rangle=\Hol(G)\rtimes\langle \inv_G\rangle,\quad H=\InHol(G)\rtimes\langle\inv_G\rangle$$ 
(here $\inv_G\not\in \Hol(G)$ because $G$ is nonabelian) and therefore
\begin{multline*} N_{\Sym(G)}(H)/H\cong \frac{\Hol(G)\rtimes\langle \inv_G\rangle}{\InHol(G)\rtimes\langle\inv_G\rangle}\cong\Hol(G)/\InHol(G)\cong\frac{\lambda(G)\rtimes\Aut(G)}{\lambda(G)\rtimes\Inn(G)} \cong\Aut(G)/\Inn(G)\\=\Out(G)\cong T.\end{multline*}
This concludes the proof of Theorem \ref{thm: main}.
\end{proof}

\begin{lem}\label{lem: cor of main thm} Assume that $T$ is a normalizer quotient of $S_n$. Then it is also a normalizer quotient of $S_m$ and of $A_m$ for any $m\ge 2n+1$.\end{lem}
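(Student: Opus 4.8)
The plan is to realize $T$ inside the larger groups by adjoining a rigid ``filler'' permutation group on the extra $m-n$ points, chosen large enough that it cannot be confused with the original action of $H$. Write $W=N_{S_n}(H)$, so that $W/H\cong T$ by hypothesis, and regard $S_n$ as acting on $\{1,\dots,n\}$. For the symmetric group case I would set
$$H'=H\times S_{m-n}\le S_m,$$
where $H$ acts on $\{1,\dots,n\}$ and the factor $S_{m-n}$ acts on the remaining $m-n$ points. The crucial point is that $m\ge 2n+1$ forces $m-n\ge n+1$, so the single orbit of the $S_{m-n}$-factor has size $m-n$, strictly larger than every $H$-orbit (which has size at most $n$). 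Since any element of $N_{S_m}(H')$ must permute the orbits of $H'$ and hence preserve the unique largest one, it must preserve the block $\{1,\dots,n\}$ together with its complement. This is exactly where the bound $2n+1$ enters, and I expect it to be the cleanest step.

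Having forced a normalizing element into the block form $g=g_1\times g_2$ with $g_1\in S_n$ and $g_2\in S_{m-n}$, projecting the relation $gH'g^{-1}=H'$ onto the first block gives $g_1Hg_1^{-1}=H$, i.e. $g_1\in W$, while $g_2$ is unconstrained; conversely every element of $W\times S_{m-n}$ normalizes $H'$. Hence $N_{S_m}(H')=W\times S_{m-n}$ and
$$N_{S_m}(H')/H'\cong (W\times S_{m-n})/(H\times S_{m-n})\cong W/H\cong T,$$
which settles the symmetric case.

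For the alternating group I would take $\tilde H=H'\cap A_m$, the even part of $H'$, which is characteristic in $H'$ and of index $2$ (since $S_{m-n}$ contains odd permutations for $m-n\ge 2$). The first task is to show $N_{S_m}(\tilde H)=N_{S_m}(H')$: the inclusion $\supseteq$ holds because $\tilde H$ is characteristic in $H'$, and for $\subseteq$ I would rerun the orbit-rigidity argument, checking that $\tilde H$ still has the filler points as one orbit of size $m-n>n$ (here one uses $m-n\ge 3$ when $n\ge 2$, the cases $n\le 1$ being trivial) and that its projection to the first block is still $H$. Intersecting with $A_m$ then gives $N_{A_m}(\tilde H)=N_{S_m}(H')\cap A_m$.

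The main obstacle is the final quotient computation, i.e. verifying that passing to even permutations does not shrink the quotient. I would handle this with the elementary fact that if $Y\le X$ are finite groups and $\varepsilon\colon X\to C_2$ is a homomorphism whose restriction to $Y$ is surjective, then $\ker\varepsilon/(Y\cap\ker\varepsilon)\cong X/Y$. Applying this with $X=W\times S_{m-n}$, $Y=H'=H\times S_{m-n}$, and $\varepsilon$ the sign character (whose restriction to $Y$ is onto because $S_{m-n}$ has odd elements, so that $\ker\varepsilon=X\cap A_m$ and $Y\cap\ker\varepsilon=\tilde H$) yields
$$N_{A_m}(\tilde H)/\tilde H=(X\cap A_m)/(Y\cap A_m)\cong X/Y\cong T,$$
completing the alternating case. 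I expect the bookkeeping in checking the hypotheses of this sign lemma (surjectivity of $\varepsilon|_Y$ and the identification of $\tilde H$ with $Y\cap A_m$) to be the most delicate part, whereas the orbit-size rigidity is conceptually the key and uses the hypothesis $m\ge 2n+1$ directly.
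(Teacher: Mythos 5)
Your proposal is correct and follows essentially the same route as the paper: the same subgroups $H\times S_{m-n}$ and $(H\times S_{m-n})\cap A_m$, the same orbit/block rigidity exploiting $m-n>n$ to force normalizing elements into $S_n\times S_{m-n}$, the same projection argument producing $N_{S_n}(H)\times S_{m-n}$, and a final quotient computation that is the second-isomorphism-theorem version of the paper's index count. One justification, however, is wrong as stated: $\tilde H=H'\cap A_m$ need \emph{not} be characteristic in $H'=H\times S_{m-n}$. For instance, if $H=\langle\tau\rangle$ with $\tau$ a transposition, then $\phi(\tau^j,c)=(\tau^{j+\delta(c)},c)$, where $\delta(c)\in\{0,1\}$ records the parity of $c$, is an automorphism of $H'\cong C_2\times S_{m-n}$ carrying $\tilde H$ onto $1\times S_{m-n}\neq\tilde H$. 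The inclusion you wanted, $N_{S_m}(H')\le N_{S_m}(\tilde H)$, is nevertheless immediate for a different reason, the one the paper uses: conjugation by any $g\in S_m$ preserves $A_m$ (as $A_m\trianglelefteq S_m$), so $gH'g^{-1}=H'$ implies $g(H'\cap A_m)g^{-1}=H'\cap A_m$. With that one-line repair (or by checking directly that $N_{S_n}(H)\times S_{m-n}$ normalizes $\tilde H$, since componentwise conjugation preserves sign), your argument goes through exactly as the paper's does.
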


\begin{proof} We assume without loss of generality that $n>1$, otherwise the assertion is obvious. We naturally view $S_n\times S_{m-n}$ as a subgroup of $S_m$, letting $S_n$ act on $\{1,\ldots,n\}$ and $S_{m-n}$ on $\{n+1,\ldots,m\}$. Let $H\le S_n$ be such that $N_{S_n}(H)/H\cong T$. Define
$H_1=H\times S_{m-n}$ and $H_2=(H\times S_{m-n})\cap A_m$. We claim that $N_{S_{m}}(H_1)/H_1\cong N_{A_m}(H_2)/H_2\cong T$, which would establish the assertion of the lemma.

Since $m-n>n$, $H_i\le S_n\times S_{m-n}$, and $H_i$ acts transitively on $\{n+1,\ldots,m\}$ (it contains $A_{m-n}$ and $m-n\ge 3$ because $n>1$), the only invariant subset $X\subset\{1,\ldots,m\}$ of size $m-n$ for $H_i$ is $\{n+1,\ldots,m\}$. Therefore if $g\in S_m$ normalizes $H_i$, then $g$ preserves $\{n+1,\ldots,m\}$ and so $g\in S_n\times S_{m-n}$. We conclude that $N_{S_m}(H_i)\le S_n\times S_{m-n}$ and in particular $N_{A_m}(H_2)\le N_{S_m}(H_2)\le S_n\times S_{m-n}$. 

In the case of $H_1=H\times S_{m-n}$ it follows that $$N_{S_m}(H_1)=N_{S_n}(H)\times N_{S_{m-n}}(S_{m-n})=N_{S_n}(H)\times S_{m-n}$$ and therefore $N_{S_m}(H_1)/H_1\cong N_{S_n}(H)/H\cong T$. It remains to show the same for $H_2 = H_1\cap A_m$.

Clearly $N_{S_m}(H_1)\le N_{S_m}(H_2)$ because $A_m\trianglelefteq S_m$, so in particular $(N_{S_n}(H)\times S_{m-n})\cap A_m\le N_{A_m}(H_2)$. We will show the reverse inclusion. Let $g=(a,b)\in N_{A_m}(H_2)$, where $a\in S_n,b\in S_{m-n}$. For any $h\in H$ we can pick $c\in S_{m-n}$ such that $(h,c)\in H_2$. Since $(a,b)$ normalizes $H_2$ we have $a^{-1}ha\in H$ and therefore $a\in N_{S_n}(H)$ (since $h\in H$ can be arbitrary). Thus $g=(a,b)\in (N_{S_n}(H)\times S_{m-n})\cap A_m$. We conclude that $N_{A_m}(H_2)=(N_{S_n}(H)\times S_{m-n})\cap A_m$. 

Since $H\times S_{m-n}$ is not contained in $A_m$, we see that 
$$[\Gamma:\Gamma\cap A_m]=[\Gamma A_m:A_m]=[S_m:A_m]=2$$
for both $\Gamma = N_{S_n}(H)\times S_{m-n},\, H\times S_{m-n}$, and so the injection
$$N_{A_m}(H_2)/H_2= \frac{(N_{S_n}(H)\times S_{m-n})\cap A_m}{(H\times S_{m-n})\cap A_m}\hookrightarrow\frac{N_{S_n}(H)\times S_{m-n}}{H\times S_{m-n}}\cong T$$ is an isomorphism (since both quotients have the same size).
\end{proof}

\begin{proof}[Proof of Corollary \ref{cor: main}] Immediate from Theorem \ref{thm: main} and Lemma \ref{lem: cor of main thm} (take $n_0(T)=2n+1$ where $T$ is a normalizer quotient of $S_n$).\end{proof}

\bibliography{mybib, cindy-bib}
\bibliographystyle{alpha}

\end{document}